\newtheorem{lemma}{Lemma}[section]
\newtheorem{theorem}{Theorem}[section]
\newtheorem{conjecture}{Conjecture}[section]
\newcommand{\la}{\lambda}
\numberwithin{equation}{section}
\newcommand{\beq}[1]{\begin{equation}\label{#1}}
\newcommand{\eeq}{\end{equation}}
\title[Expanders with superquadratic growth]{Expanders with superquadratic growth}
\author[A. Balog, O. Roche-Newton and D. Zhelezov]{Antal Balog, Oliver Roche-Newton and Dmitry Zhelezov}
\address{A. Balog: Alfr\'{e}d R\'{e}nyi Institute of Mathematics, Hungarian Academy of Sciences, Budapest, Hungary }
\email{balog.antal@renyi.mta.hu }
\address{O. Roche-Newton: 69 Altenberger Stra{\ss}e, Johannes Kepler Universit\"{a}t, Linz, Austria }
\email{o.rochenewton@gmail.com }
\address{D. Zhelezov: Mathematical Sciences, Chalmers University of Technology, G\"oteborg, Sweden }
\email{dzhelezov@gmail.com}
\begin{document}

\begin{abstract}
We will prove several expanders with exponent strictly greater than $2$. For any finite set $A \subset \mathbb R$, we prove the following six-variable expander results
\begin{align*}
|(A-A)(A-A)(A-A)| &\gg \frac{|A|^{2+\frac{1}{8}}}{\log^{\frac{17}{16}}|A|},
\\ \left|\frac{A+A}{A+A}+\frac{A}{A}\right| &\gg \frac{|A|^{2+\frac{2}{17}}}{\log^{\frac{16}{17}}|A|},
\\ \left|\frac{AA+AA}{A+A}\right| &\gg \frac{|A|^{2+\frac{1}{8}}}{\log |A|},
\\ \left|\frac{AA+A}{AA+A}\right| &\gg \frac{|A|^{2+\frac{1}{8}}}{\log |A|}.
\end{align*}
\end{abstract} 
\maketitle
\section{Introduction}
Let $A$ be a finite\footnote{From now on, $A,B,C$ etc. will always be finite sets.} set of real numbers. The \textit{sum set} of $A$ is the set $A+A=\{a+b:a,b \in A \}$ and the product set $AA$ is defined analogously. The Erd\H{o}s-Szemer\'{e}di sum-product conjecture\footnote{In fact, the conjecture was originally stated for all $A \subset \mathbb Z$, but it is also widely believed to be true for all $A \subset \mathbb R$.} states that, for any such $A$ and all $\epsilon>0$ there exists an absolute constant $c_{\epsilon}>0$ such that
$$\max \{ |A+A|,|AA| \} \geq c_\epsilon|A|^{2-\epsilon}.$$
In other words, it is believed that at least one of the sum set and product set will always be close to the maximum possible size $|A|^2$, suggesting that sets with additive structure do not have multiplicative structure, and vice versa.

A familiar variation of the sum-product problem is that of showing that sets defined by a combination of additive and multiplicative operations are large. A classical and beautiful result of this type, due to Ungar \cite{ungar}, is the result that for any finite set $A \subset \mathbb R$
\begin{equation}
\left|\frac{A-A}{A-A} \right| \geq |A|^2-2,
\label{ungar}
\end{equation}
where
$$\frac{A-A}{A-A}= \left \{ \frac{a-b}{c-d} : a,b,c,d \in A, c \neq d \right \}.$$
This notation will be used with flexibility to describe sets formed by a combination of additive and multiplicative operations on different sets. For example, if $A,B$ and $C$ are sets of real numbers, then
$AB+C:=\{ab+c:a\in A, b \in B, c \in C \}$. We use the shorthand $kA$ for the $k$-fold sum set; that is $kA:=\{a_1+a_2+\cdots+a_k:a_1,\dots,a_k \in A\}$. Similarly, the $k$-fold product set is denoted $A^{(k)}$; that is $A^{(k)}:=\{a_1a_2\cdots a_k : a_1,\dots,a_k \in A \}$. 

We refer to sets such as $\frac{A-A}{A-A}$, which are known to be large, as \textit{expanders}. To be more precise, we may specify the number of variables defining the set; for example, we refer to $\frac{A-A}{A-A}$ as a \textit{four variable expander}.

Recent years have seen new lower bounds for expanders. For example, Roche-Newton and Rudnev \cite{rectangles} proved\footnote{Throughout the paper, this standard notation
$\ll,\gg$ and respectively $O(\cdot), \Omega(\cdot)$ is applied to positive quantities in the usual way. Saying $X\gg Y$ or $X=\Omega(Y)$ means that $X\geq cY$, for some absolute constant $c>0$. All logarithms in this paper are base $2$.} that for any $A \subset \mathbb R$
\begin{equation}
|(A-A)(A-A)| \gg \frac {|A|^2}{\log|A|},
\label{Mink}
\end{equation}
and Balog and Roche-Newton \cite{BORN} proved that for any set $A$ of strictly positive real numbers
\begin{equation}
\left|\frac{A+A}{A+A} \right| \geq 2|A|^2-1.
\label{ratiosum}
\end{equation}
Note that equations \eqref{ungar}, \eqref{Mink} and \eqref{ratiosum} are optimal up to constant (and in the case of \eqref{Mink}, logarithmic) factors, as can be seen by taking $A=\{1,2,\dots,N\}$. More generally, any set $A$ with $|A+A| \ll |A|$ is extremal for equations \eqref{ungar}, \eqref{Mink} and \eqref{ratiosum}.

With these results, along with others in \cite{GK}, \cite{TJthesis}, \cite{MORNS} and \cite{ORN}, we have a growing collection of near-optimal expander results with a lower bound $\Omega(|A|^2)$ or $\Omega(|A|^2 / \log |A|)$. All of the near-optimal expanders that are known have at least $3$ variables. The aim of this paper is to move beyond this quadratic threshold and give expander results with relatively few variables and with lower bounds of the form $\Omega(|A|^{2+c})$ for some absolute constant $c>0$.

\subsection{Statement of results}

It was conjectured in \cite{BORN} that for any $A \subset \mathbb R$ and any $\epsilon>0$, $|(A-A)(A-A)(A-A)| \gg |A|^{3-\epsilon}$. In this paper, a small step towards this conjecture is made in the form of the following result.

\begin{theorem}  \label{thm:main1} Let $A \subset \mathbb R$. Then
$$|(A-A)(A-A)(A-A)| \gg \frac{|A|^{2+\frac{1}{8}}}{\log^{17/16}|A|}.$$
\end{theorem}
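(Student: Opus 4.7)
The plan is to amplify the bound $|(A-A)(A-A)|\gg |A|^2/\log|A|$ from \eqref{Mink} by one more factor of $A-A$, via a Cauchy--Schwarz estimate on a third multiplicative energy.

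Set $D:=A-A$, and for each $t\in DDD$ write
\[
r(t):=\bigl|\{(d_1,d_2,d_3)\in D^3:\ d_1d_2d_3=t\}\bigr|.
\]
Since $\sum_t r(t)=|D|^3\gg |A|^3$, Cauchy--Schwarz yields
\[
|DDD|\ \ge\ \frac{|D|^6}{E_{3}^{\times}(D)},\qquad E_{3}^{\times}(D):=\sum_t r(t)^2.
\]
Hence Theorem~\ref{thm:main1} reduces to the energy estimate
\[
E_{3}^{\times}(A-A)\ \ll\ |A|^{31/8}\,\log^{17/16}|A|.
\]

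To prove this estimate I would interpret $E_{3}^{\times}(D)$ as the number of solutions to $d_1d_2d_3=d_4d_5d_6$ with $d_i\in D$ and, after fixing a ratio $\lambda=d_6/d_3$, reduce the problem to counting solutions of $d_1d_2=\lambda d_4d_5$ inside $D$. Decomposing dyadically in the representation function $r_{D/D}(\lambda)$ and then, for each popular $\lambda$, invoking the Szemer\'edi--Trotter theorem applied to an arrangement of lines built from the Katz--Koester inclusion $D\supseteq A-a$ for $a\in A$, one can bound the contribution of each dyadic level. The exponent $1/8$ is what comes out of the optimal dyadic trade-off between the popularity of ratios in $D/D$ and the incidence bound, while the logarithmic exponent $17/16$ absorbs the summation loss in the dyadic decomposition together with the logarithmic loss transferred from \eqref{Mink} through the Cauchy--Schwarz step.

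The main obstacle is the incidence estimate. A direct application of Szemer\'edi--Trotter to $D$, using only $|D|\asymp |A|$, gives a bound of the shape $E_3^{\times}(D)\ll |A|^5$, which recovers only the trivial conclusion $|DDD|\gg |A|$. The additive structure $D=A-A$ must be used in an essential way via the Katz--Koester inclusion, which replaces the generic set $D$ by a structured family of translates of $A$; the incidence geometry on a Cartesian grid is then sharp enough to save the required factor of $|A|^{9/8}$ over the trivial bound. Carefully balancing the low- and high-multiplicity ranges of $r_{D/D}$ in the dyadic sum is what pins down the specific exponents $17/8$ and $17/16$ appearing in the statement.
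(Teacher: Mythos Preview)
Your reduction via Cauchy--Schwarz to the energy estimate
\[
E_3^\times(A-A)\ \ll\ |A|^{31/8}\log^{17/16}|A|
\]
is correct as a reduction, but you do not prove this bound: you outline a route (dyadic decomposition in $r_{D/D}(\lambda)$, Katz--Koester inclusion, Szemer\'edi--Trotter) and then explicitly identify the incidence step as ``the main obstacle''. That obstacle is genuine and your sketch does not overcome it. Nothing in the outline forces the exponent $1/8$ as opposed to any other value; the numbers $31/8$ and $17/16$ are read off from the target rather than derived, and the phrase ``the incidence geometry on a Cartesian grid is then sharp enough to save the required factor of $|A|^{9/8}$'' is an assertion, not an argument. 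As it stands this is a plan, not a proof.

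The paper's argument is entirely different and bypasses energy. The decisive structural input is Shkredov's identity $R[A]-1=-R[A]$ for the three-variable ratio set $R[A]=\{(a-b)/(a-c):a,b,c\in A\}\subset DD/DD$. Combining this with Lemma~\ref{GS} (the Garaev--Shen bound $|X(X+\alpha)|\gg|X|^{5/4}$) and Lemma~\ref{3var} gives
\[
|DD/DD|\ \ge\ |R[A]\cdot R[A]|\ =\ |R[A]\,(R[A]-1)|\ \gg\ |R[A]|^{5/4}\ \gg\ \frac{|A|^{5/2}}{\log^{5/4}|A|}.
\]
One then converts the lower bound on $|DD/DD|$ into one on $|DDD|$ by a short multiplicative Pl\"unnecke--Ruzsa argument (Lemma~\ref{KSPlun} applied with $X=DD$, $B_1=B_2=D$, followed by Lemma~\ref{triangle}), together with \eqref{Mink}. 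The exponent $1/8$ arises transparently from the $5/4$ in Garaev--Shen, and no incidence estimate beyond those already packaged inside Lemmas~\ref{GS} and~\ref{3var} is required.
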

This result is the first improvement on the bound $|(A-A)(A-A)(A-A)| \gg |A|^2 /\log |A|$ which follows trivially from \eqref{Mink}. The proof uses some beautiful ideas of Shkredov \cite{S}.

The following theorem  gives partial support for the aforementioned conjecture from a slightly different perspective.
\begin{theorem}  \label{thm:kfold} Let $A \subset \mathbb R$. Then for any $\epsilon > 0$ there is an integer $k > 0$ such that
$$|(A-A)^{(k)}| \gg_{\epsilon} |A|^{3 - \epsilon}.$$
\end{theorem}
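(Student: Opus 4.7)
The plan is to combine a real-valued iterated sum-product theorem of Bourgain-Chang type with the translation trick $(A-a_0)^{(k)} \subseteq (A-A)^{(k)}$, valid for any $a_0 \in A$.

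The key input is the following iterated sum-product statement, which for the real field has been established through the work of Bourgain-Chang, Bush-Croot and subsequent refinements: for every $b > 0$ there exists an integer $k = k(b) > 0$ such that every finite $C \subset \mathbb{R}$ satisfies
$$\max\{|kC|, |C^{(k)}|\} \gg_b |C|^b.$$
Fix $\epsilon > 0$ and apply this with $b = 3-\epsilon$. For each $a_0 \in A$, apply the theorem with $C = A - a_0$. Since $k(A-a_0)$ is merely a translate of $kA$, one has $|k(A-a_0)| = |kA|$, while the product set satisfies $(A - a_0)^{(k)} \subseteq (A-A)^{(k)}$. Hence for every $a_0 \in A$,
$$\max\{|kA|, |(A-a_0)^{(k)}|\} \gg_{\epsilon} |A|^{3-\epsilon}.$$
If the second quantity dominates for at least one $a_0$, the stated inclusion completes the proof.

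Otherwise $|kA| \gg |A|^{3-\epsilon}$ must hold. Then Pl\"{u}nnecke-Ruzsa forces $|A-A| \gg |A|^{1+\eta}$ for some $\eta = \eta(\epsilon, k) > 0$. Set $B := A-A$ and iterate: apply the Bourgain-Chang theorem to $B$ with exponent $b' := (3-\epsilon)/(1+\eta)$. Either $|B^{(k')}| \gg |B|^{b'} \gg |A|^{3-\epsilon}$, and since $B^{(k')} = (A-A)^{(k')}$ we are done, or $|k'(A-A)| = |k'A - k'A| \gg |A|^{3-\epsilon}$, in which case a further application of Pl\"{u}nnecke-Ruzsa improves the lower bound on $|A-A|$. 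Since $|A-A| \leq |A|^2$, the iteration terminates in $O_{\epsilon}(1)$ rounds, at which point the product-set alternative must win.

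\textbf{Main obstacle.} The principal technical difficulty is quantitative: one must verify that each round produces a strictly positive increment in the exponent $\eta$, rather than stalling. This requires careful tracking of how the parameter $k$ in the Bourgain-Chang theorem depends on the target exponent $b$, combined with the Pl\"{u}nnecke-Ruzsa loss per step. I expect this bookkeeping, together with verifying the real-valued form of the Bourgain-Chang theorem invoked as a black box, to be where most of the technical work concentrates.
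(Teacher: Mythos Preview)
Your plan takes a very different route from the paper. The paper's argument is short and self-contained modulo Szemer\'edi--Trotter consequences: with $D=A-A$ and $R=R[A]$, it exploits Shkredov's identity $R-1=-R$ together with the Garaev--Shen bound $\max\{|XR|,|X(R-1)|\}\gg |X|^{1/2}|R|^{3/4}$ and Jones' estimate $|R|\gg |A|^2/\log|A|$ to produce, by induction on $k$, a set $X_k\subset D^{(k+1)}/D^{(k+1)}$ with $|X_k|\gg_k |A|^{3-2^{-k}}/\log^{3/2}|A|$. A single application of the multiplicative Ruzsa triangle inequality then converts this into $|D^{(2k+2)}|\gg_\epsilon |A|^{3-\epsilon}$. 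No iterated sum--product black box is needed.

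Your plan, by contrast, has two genuine gaps which you yourself flag but do not resolve. First, the black box: a Bourgain--Chang statement of the form $\max\{|kC|,|C^{(k)}|\}\gg_b|C|^b$ for arbitrary $b$ is proved for $C\subset\mathbb Z$ using prime-factorisation/subspace-theorem input that has no direct analogue over $\mathbb R$; I am not aware of a reference establishing it for arbitrary finite $C\subset\mathbb R$, and proving it from scratch looks at least as hard as the theorem you want to deduce from it. Second, even granting that black box, your iteration need not make progress. If at stage $n$ you know $|A-A|\ge |A|^{1+\eta_n}$, set $b_n=(3-\epsilon)/(1+\eta_n)$, and apply the black box to $B=A-A$ with parameter $k_n=k(b_n)$, then the ``sum'' alternative $|k_nB|\gg|B|^{b_n}$ combined with Pl\"unnecke yields only $\eta\ge (b_n-1)/(2k_n-b_n)$; a two-line computation shows this exceeds $\eta_n$ if and only if $\eta_n<(2-\epsilon)/(2k_n)$. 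Since $k_n$ depends on $\eta_n$ through the unspecified function $k(\cdot)$, nothing prevents the sequence $(\eta_n)$ from stalling at a fixed point well below $1$. So the ``bookkeeping'' you anticipate is not merely tedious tracking of constants: it is where the missing idea lives, and without control on how $k(b)$ shrinks as $b\downarrow 1$ the scheme does not close.
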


We also prove the following six variables expanders have superquadratic growth.

\begin{theorem}  \label{thm:main2} Let $A \subset \mathbb R$. Then
$$\left|\frac{A+A}{A+A}+\frac{A}{A}\right| \gg \frac{|A|^{2+2/17}}{\log^{16/17}|A|}.$$
\end{theorem}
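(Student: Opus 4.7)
Set $Q := (A+A)/(A+A)$ and $R := A/A$, so that $T = Q + R$. The goal is to extract a factor of $|A|^{2/17}$ beyond the bound $|Q| \geq 2|A|^2 - 1$ furnished by \eqref{ratiosum}, by exploiting the freedom afforded by the extra summand $R$. The natural starting point is the Cauchy--Schwarz inequality applied to the representation function of $T$,
\[ |T| \;\geq\; \frac{|Q|^2 |R|^2}{E^+(Q,R)}, \]
where $E^+(Q,R) := \sum_y r_{Q-Q}(y)\, r_{R-R}(y)$ denotes the joint additive energy; this reduces the problem to establishing a sharp upper bound on $E^+(Q,R)$.

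Following the Shkredov strategy already invoked for Theorem~\ref{thm:main1}, each element $r=a/b \in R$ is weighted by its representation count $r_{A/A}(r) = |\{(a,b)\in A^2 : a/b = r\}|$, and each $q=(a+b)/(c+d)\in Q$ similarly by $r_{(A+A)/(A+A)}(q)$. A dyadic decomposition of both $Q$ and $R$ according to these weights rewrites $E^+(Q,R)$ as a sum of weighted incidence counts between points of $A\times A$ and families of lines indexed by differences of elements of $Q$. Applying the Szemer\'edi--Trotter theorem on each dyadic level and summing gives the desired upper bound for the energy; the two dyadic decompositions account for the logarithmic loss. The exponent $2/17$ arises from the balance between the Cauchy--Schwarz input and the incidence estimate: raised to the $17$th power, the resulting inequality takes the shape $|T|^{17} \gg |A|^{36}/\log^{16}|A|$, which upon extraction of the $17$th root yields the stated bound $|T| \gg |A|^{2+2/17}/\log^{16/17}|A|$.

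The principal obstacle is the incidence step for $A/A$: the trivial upper bound $E^+(A/A) \leq |A|^6$ is utterly inadequate, and a polynomial saving must be obtained from careful counting of rich points in a line arrangement over the grid $A\times A$. Matters are further complicated by the weights $r_{A/A}$ and $r_{(A+A)/(A+A)}$, which forces one to work with weighted energies and to dyadically pigeonhole simultaneously on both $Q$ and $R$, after which the bulk of the contribution concentrates on a single level pair where the incidence estimate can be applied cleanly. Coordinating this weighted incidence count with the additive rigidity of $Q$ coming from \eqref{ratiosum} is where the quantitative gain of $|A|^{2/17}$ is produced.
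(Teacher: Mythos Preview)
Your proposal is not a proof; it is a plan whose decisive step is never carried out. You reduce to bounding the joint additive energy $E^{+}(Q,R)$ with $Q=(A+A)/(A+A)$ and $R=A/A$, and then assert that a weighted Szemer\'edi--Trotter argument on dyadic levels yields exactly the saving that, when fed back through Cauchy--Schwarz, produces $|T|^{17}\gg |A|^{36}/\log^{16}|A|$. But no concrete incidence configuration is written down, no bound on $E^{+}(Q,R)$ is stated, and no computation is shown linking any such bound to the exponent $2/17$. The sentence ``the exponent $2/17$ arises from the balance between the Cauchy--Schwarz input and the incidence estimate'' is a description of what you \emph{want} to happen, not evidence that it does. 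Also, the reference to ``the Shkredov strategy already invoked for Theorem~\ref{thm:main1}'' is off: that proof used Ruzsa's triangle inequality, the Katz--Shen Pl\"unnecke variant, and the identity $R[A]-1=-R[A]$, not a weighted energy/incidence argument of the type you sketch.

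The paper's argument is quite different and much shorter. The key observation is that $Q$ is closed under the map $x\mapsto 1/x$, so Lemma~\ref{ENR} with $f(x)=1/x$, $X=Q$, $Y=Z=R$ gives
\[
|Q+R|\ \gg\ |Q|^{3/4}\,|R|^{1/2}.
\]
One then inserts Lund's bound (Lemma~\ref{lund}),
\[
|Q|\ \gg\ \frac{|A|^{2}}{\log|A|}\Bigl(\frac{|A|^{2}}{|A/A|}\Bigr)^{1/8},
\]
to obtain $|Q+R|\gg |A|^{54/32}|A/A|^{13/32}/\log^{3/4}|A|$. Writing $|A/A|=K|A|$, this is strong when $K$ is large; when $K$ is small, Lemma~\ref{lund} alone (via $|Q+R|\geq |Q|$) is strong. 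Balancing at $K\approx |A|^{1/17}/\log^{8/17}|A|$ gives the exponent $2+2/17$ and the logarithmic factor $16/17$. No energy bound or dyadic pigeonholing is needed.
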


\begin{theorem}  \label{thm:main3} Let $A \subset \mathbb R$. Then
$$\left|\frac{AA+AA}{A+A}\right| \gg \frac{|A|^{11/8}|AA|^{3/4}}{\log |A|}.$$
In particular, since $|AA| \geq |A|$,
$$\left|\frac{AA+AA}{A+A}\right| \gg \frac{|A|^{2+\frac{1}{8}}}{\log |A|}.$$
\end{theorem}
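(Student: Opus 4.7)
The plan is to apply Cauchy--Schwarz twice to reduce the problem to bounding an additive/multiplicative energy, and then to bound that energy using an incidence-geometry argument.

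Set $R := \frac{AA+AA}{A+A}$, and for each $q \in R$ let $r(q) := |\{(a,b,c,d,e,f) \in A^6 : ab+cd = q(e+f),\ e+f \neq 0\}|$. Since $\sum_q r(q) = |A|^6 - O(|A|^5)$, Cauchy--Schwarz yields $|R| \gg |A|^{12}/\sigma$, where $\sigma := \sum_q r(q)^2$. To bound $\sigma$, group the representations by $s := e+f$, writing $r(q) = \sum_{s \in A+A} r_{A+A}(s)\, g(qs)$ with $g(x) := |\{(a,b,c,d) \in A^4 : ab+cd = x\}|$. A second Cauchy--Schwarz (in $s$, using $\sum_s r_{A+A}(s) = |A|^2$), followed by the substitution $y = qs$, gives
$$\sigma \;\leq\; |A|^4\,T, \qquad T := \sum_x g(x)^2.$$
Hence $|R| \geq |A|^8/T$, so it suffices to prove
$$T \;\ll\; \frac{|A|^{53/8}\log|A|}{|AA|^{3/4}}.$$

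The quantity $T$ counts octuples $(a_1,b_1,\ldots,a_4,b_4) \in A^8$ with $a_1b_1+a_2b_2 = a_3b_3+a_4b_4$; after rearranging we write $T = \sum_z H(z)^2$, where $H(z) := |\{(a,b,c,d) \in A^4 : ab-cd = z\}|$ satisfies $\sum_z H(z) = |A|^4$. The plan is a dyadic decomposition $T \ll \log|A| \cdot \Delta^2|Z_\Delta|$ at the dominant level set $Z_\Delta := \{z : H(z) \sim \Delta\}$. For $z \in Z_\Delta$ the equation $ab = cd + z$ records incidences between the $|A|^2$ points $(a,b) \in A \times A$ and the family of hyperbolas $xy = w$ with $w \in AA + z$, of which there are exactly $|AA|$ distinct ones, each carrying a multiplicity $r_{AA}(w-z)$. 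Applying Szemer\'edi--Trotter to this pseudo-line configuration and combining it with Shkredov's third-moment machinery \cite{S} for handling the weights $r_{AA}$ yields a bound on $|Z_\Delta|$ that carries a saving of a power of $|AA|$; summing over dyadic $\Delta$ then produces the claimed estimate for $T$, with exponent $3/4$ on $|AA|$.

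The main obstacle is this last energy estimate: while the two Cauchy--Schwarz steps and the dyadic decomposition are routine, extracting exactly the right exponent of $|AA|$ requires a weighted Szemer\'edi--Trotter argument of Shkredov type in which the multiplicities $r_{AA}$ are handled dyadically and, at each level, one uses either the trivial bound or the incidence bound depending on which dominates. Once this estimate for $T$ is in hand, the theorem follows immediately from $|R| \geq |A|^8/T$ together with $|AA| \geq |A|$ for the second inequality in the statement.
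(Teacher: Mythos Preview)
Your approach is genuinely different from the paper's, but it has a fatal gap: the energy bound
\[
T \;=\; \#\{(a_1,\dots,a_4,b_1,\dots,b_4)\in A^8 : a_1b_1+a_2b_2=a_3b_3+a_4b_4\} \;\ll\; \frac{|A|^{53/8}\log|A|}{|AA|^{3/4}}
\]
that you need is \emph{false}. Take $A=\{1,r,r^2,\dots,r^{N-1}\}$ with $r$ transcendental. Then $|AA|\asymp |A|$, so your bound would require $T\ll |A|^{47/8}\log|A|$. But for this $A$ the equation $r^{k_1}+r^{k_2}=r^{k_3}+r^{k_4}$ forces $\{k_1,k_2\}=\{k_3,k_4\}$, whence
\[
T \;\asymp\; \bigl(E^{\times}(A)\bigr)^2 \;\asymp\; \Bigl(\sum_{k}\rho(k)^2\Bigr)^2 \;\asymp\; |A|^6,
\]
where $\rho(k)$ is the number of representations $k=i+j$ with $0\le i,j\le N-1$. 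Since $6>47/8$, no incidence or third-moment argument can deliver the bound you posit; the obstruction is arithmetic, not technical. Correspondingly, your chain $|R|\ge |A|^8/T$ gives only $|R|\gg |A|^2$ for geometric progressions, whereas the theorem asserts $|R|\gg |A|^{17/8}/\log|A|$ in that case. The loss occurs already at the two Cauchy--Schwarz steps, which discard all information about $A+A$ and reduce everything to the additive energy of the weighted product set; that energy is simply too large when $|AA|$ is small.

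For comparison, the paper's proof is a two-line application of a black box: Lund's asymmetric bound (Lemma~\ref{lund2}) with $B=AA$ gives
\[
\left|\frac{AA+AA}{A+A}\right| \;\gg\; \frac{|A|\,|AA|}{\log|A|}\left(\frac{|A|\,|AA|}{|AA/A|}\right)^{1/8},
\]
and then Pl\"unnecke--Ruzsa in the multiplicative group gives $|AA/A|\le |AA|^3/|A|^2$, which plugs in to yield the stated exponent. The point is that Lund's lemma already encodes a delicate geometric argument (clustering of slopes plus the Lov\'asz Local Lemma) that cannot be replaced by Cauchy--Schwarz against an $L^2$ energy.
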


\begin{theorem}  \label{thm:main4} Let $A \subset \mathbb R$. Then
$$\left|\frac{AA+A}{AA+A}\right| \gg \frac{|A|^{2+\frac{1}{8}}}{\log |A|}.$$
\end{theorem}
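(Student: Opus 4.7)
Let $R = AA + A$; we wish to prove $|R/R| \gg |A|^{17/8}/\log|A|$. Write $r(x) = \#\{(a,b,c) \in A^3 : ab+c = x\}$, so that the support of $r$ is $R$ and $\sum_x r(x) = |A|^3$. For each $t \in R/R$ define
\[
W(t) = \sum_{\substack{x,y \in R \\ x = ty}} r(x)\,r(y),
\]
so that $\sum_{t} W(t) = |A|^6$. Cauchy--Schwarz applied to this identity yields
\[
|R/R| \;\geq\; \frac{|A|^{12}}{T}, \qquad T := \sum_{t \in R/R} W(t)^2,
\]
and $T$ is exactly the number of $12$-tuples $(a_i,b_i,c_i)_{i=1}^4 \in A^{12}$ satisfying $(a_1 b_1 + c_1)(a_2 b_2 + c_2) = (a_3 b_3 + c_3)(a_4 b_4 + c_4)$. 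It therefore suffices to prove the upper bound $T \ll |A|^{79/8}\log |A|$.

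To bound $T$, I would set $u_i = a_i b_i \in AA$ with multiplicity $\nu(u) := r_{AA}(u)$ (so $\sum_u \nu(u) = |A|^2$), recasting the equation as $(u_1 + c_1)(u_2 + c_2) = (u_3 + c_3)(u_4 + c_4)$ with $u_i \in AA$ and $c_i \in A$. For fixed $(c_1,c_2) \in A^2$, as $M$ ranges, the curves $\{(x,y) : (x+c_1)(y+c_2) = M\}$ form a pencil of hyperbolae in the plane, any two of which intersect in $O(1)$ points. Interpreting the right-hand side as an index on the pencil, I would apply the Szemer\'edi--Trotter theorem to count weighted incidences of the point set $AA \times AA$ (weighted by $\nu \otimes \nu$) against this pencil, after dyadically decomposing both the point-weights $\nu$ and the multiplicities of the parameter $M$ (at the cost of an $O(\log |A|)$ factor). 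Summing the SzT bound across the $O(\log |A|)$ dyadic levels and then over the parameters $(c_1,c_2,c_3,c_4) \in A^4$ should deliver the claimed estimate on $T$.

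The principal obstacle is to execute this dyadic incidence calculation so that the various exponents balance to yield exactly $|A|^{79/8}$, rather than the trivial $|A|^{11}$ or a near-trivial $|A|^{10}$ that a careless application of Szemer\'edi--Trotter would produce; the pencil structure of the hyperbolae and the $L^1$ and $L^2$ moments of $\nu$ must both be used essentially. Since the bound of Theorem~\ref{thm:main4} coincides with that of Theorem~\ref{thm:main3} precisely in the regime $|AA|\sim |A|$, I expect the argument to closely parallel the proof of Theorem~\ref{thm:main3}. An attractive alternative route is to find a direct set-theoretic relation -- perhaps after a multiplicative renormalization such as replacing $A$ by $a^{-1}A$ for some $a \in A$ -- between $\frac{AA+A}{AA+A}$ and $\frac{AA+AA}{A+A}$, which would allow Theorem~\ref{thm:main4} to be deduced as a corollary of Theorem~\ref{thm:main3}.
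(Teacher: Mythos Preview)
Your proposal is a plan rather than a proof: all of the content resides in the claimed bound $T \ll |A|^{79/8}\log|A|$, which you do not establish and yourself flag as the ``principal obstacle''. There is no visible mechanism by which a dyadic Szemer\'edi--Trotter count on your hyperbola pencils would deliver precisely this exponent; a straightforward application, as you note, gives only $|A|^{10}$ or worse, and the weights $\nu$ do not in themselves carry the missing $1/8$. In fact the $1/8$ in Lund's theorem (Lemma~\ref{lund}) and in Theorem~\ref{thm:main4} arises not from any energy inequality but from the Konyagin--Shkredov clustering refinement of a direct slope-construction argument---precisely the sort of gain that a Cauchy--Schwarz-then-incidence approach discards. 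Your expectation that the argument ``closely parallels the proof of Theorem~\ref{thm:main3}'' is also off the mark: Theorem~\ref{thm:main3} is a two-line black-box application of Lemma~\ref{lund2} together with Pl\"unnecke, whereas for $\frac{AA+A}{AA+A}$ no ratio set $A/B$ with the right containment is available, and the paper says explicitly that one must ``dissect the methods from \cite{L} in more detail and reconstruct a variant of the argument''. Likewise, the hoped-for set-theoretic embedding of $\frac{AA+A}{AA+A}$ into $\frac{AA+AA}{A+A}$ via dilation does not appear to exist: dividing by $a\in A$ sends $AA+A$ to $a^{-1}AA+a^{-1}A$, and neither factor is contained in $AA$ or $A$ in general.

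The paper's proof is of an entirely different character. One interprets $\frac{AA+A}{AA+A}$ as the slope set $r((AA+A)\times(AA+A))$ and observes that for any two points $(a_i,\lambda_i a_i),(\alpha_j,\lambda_j\alpha_j)\in A\times A$ lying on lines through the origin of slopes $\lambda_i<\lambda_j\in A/A$, the points $(a_i,\lambda_i a_i)+x\cdot(\alpha_j,\lambda_j\alpha_j)$ for $x\in A$ lie in $(A+AA)\times(A+AA)$ with $|A|$ distinct slopes strictly between $\lambda_i$ and $\lambda_j$. After dyadic pigeonholing to find a set $S_\tau$ of $\gg|A|/\log|A|$ slopes each supporting $\approx\tau$ points of $A\times A$, one partitions $S_\tau$ into consecutive clusters of size $2M$ and, within each cluster, counts slopes produced by all $M^2$ pairs $(\lambda_i,\lambda_j)$. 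Collisions are controlled by a Pach--Sharir incidence bound (Lemma~\ref{thm:PS}) for the relevant degree-two curves, and the Lov\'asz Local Lemma (Lemma~\ref{thm:lovazs}) is invoked to choose the base points $a_{ij}\in A_{\lambda_i}$ so that all $O(M^4)$ pairwise collision counts are simultaneously small. Optimising $M\asymp(\tau^2/|A|)^{1/8}$ yields $\gg M^2|A|$ slopes per cluster and hence $\gg |S_\tau|M|A|\gg |A|^{17/8}/\log|A|$ overall; the exponent $1/8$ comes directly from the $M^8$ in the Local Lemma condition~\eqref{probcond2}.
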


The proofs of these three results make use of the results and ideas of Lund \cite{L}.

In fact, a closer inspection of the proof of Theorem \ref{thm:main4} reveals that we obtain the inequality
$$\left | \left \{ \frac{ab+c}{ad+e} :a,b,c,d,e \in A \right \} \right |\gg \frac{|A|^{2+\frac{1}{8}}}{\log |A|}.$$
Therefore, Theorem \ref{thm:main4} actually gives a superquadratic five variable expander.

\section{Preliminary Results}

For the proof of Theorem \ref{thm:main1} we will require the Ruzsa Triangle Inequality. See Lemma 2.6 in Tao-Vu \cite{TV}.
\begin{lemma} \label{triangle} Let $A,B$ and $C$ be subsets of an abelian group $(G,+)$. Then
$$|A-B||C| \leq |A-C||B-C|.$$
\end{lemma}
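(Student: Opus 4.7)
The plan is to exhibit an injection from $(A-B)\times C$ into $(A-C)\times(B-C)$; a bijective inequality on cardinalities then yields the result directly, with no need for Fourier, Pl\"unnecke, or energy estimates.

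First, for each element $x\in A-B$, I would fix an arbitrary representation $x=a(x)-b(x)$ with $a(x)\in A$ and $b(x)\in B$; since $A-B$ is finite this is just a finite choice. Using this, define the map
$$\phi:(A-B)\times C\longrightarrow (A-C)\times (B-C),\qquad \phi(x,c)=\bigl(a(x)-c,\ b(x)-c\bigr).$$
The map is well-defined because $a(x)-c\in A-C$ and $b(x)-c\in B-C$ by construction.

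The main (and essentially only) step is to verify that $\phi$ is injective. Suppose $\phi(x,c)=(y,z)$. Then
$$y-z=\bigl(a(x)-c\bigr)-\bigl(b(x)-c\bigr)=a(x)-b(x)=x,$$
so $x$ is recovered from the pair $(y,z)$. Once $x$ is known, the fixed representative $a(x)$ is known, and hence $c=a(x)-y$ is also determined. Thus $(x,c)$ is uniquely reconstructed from $\phi(x,c)$, and $\phi$ is injective.

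From injectivity it follows that
$$|A-B|\cdot|C|=\bigl|(A-B)\times C\bigr|\leq\bigl|(A-C)\times(B-C)\bigr|=|A-C|\cdot|B-C|,$$
which is the claimed inequality. There is no real obstacle here beyond guessing the correct injection; the clever point is to encode $x$ in the difference of the two output coordinates so that $x$ can be read off before inverting for $c$. The argument works verbatim in any abelian group, and is the standard proof from Tao--Vu that the statement references.
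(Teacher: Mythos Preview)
Your proof is correct; this is the standard injection argument for the Ruzsa triangle inequality. The paper does not supply its own proof but simply cites Lemma~2.6 of Tao--Vu, where exactly this argument appears, so your approach coincides with the one the paper invokes.
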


A closely related result is the Pl\"{u}nnecke-Ruzsa inequality. A simple proof of the following formulation of the Pl\"{u}nnecke-Ruzsa inequality can be found in \cite{P}.

\begin{lemma} \label{Plun} Let $A$ be a subset of an abelian group $(G,+)$. Then
$$|kA-lA| \leq \frac{|A+A|^{k+l}}{|A|^{k+l-1}}.$$
\end{lemma}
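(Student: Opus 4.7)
The plan is to follow the proof of Petridis cited at \cite{P}, which proceeds in three stages: identify an extremal subset of $A$, prove a propagating sumset-growth bound on it by induction, and then combine with the Ruzsa triangle inequality to pass from sums to differences. To begin, set $K := |A+A|/|A|$ and pick a nonempty $X \subseteq A$ minimizing the ratio $|X+A|/|X|$; write $K'$ for this minimum. Taking $X = A$ shows $K' \leq K$, and the defining property is that $|Y + A| \geq K' |Y|$ for every nonempty $Y \subseteq A$.

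The crucial step is Petridis's lemma, which asserts that for every finite $C \subseteq G$,
$$|X + A + C| \leq K' |X + C|.$$
I would prove this by induction on $|C|$, the base $|C|=1$ following from the definition of $K'$. For the inductive step, write $C = C' \cup \{c\}$ with $c \notin C'$ and apply inclusion--exclusion,
$$|X+A+C| = |X+A+C'| + |X+A| - |(X+A+C') \cap (X+A+c)|.$$
The inductive hypothesis bounds the first term by $K'|X+C'|$ and the second equals $K'|X|$. For the intersection, set $F := \{x \in X : x + c \in X + C'\}$; then $F + A + c \subseteq (X+A+C') \cap (X+A+c)$, so the intersection has size at least $|F + A + c| = |F + A| \geq K'|F|$ by minimality applied to $F \subseteq X \subseteq A$. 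Rearranging, and using the identity $|X+C| = |X+C'| + |X| - |F|$, yields the claim.

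Iterating Petridis's lemma with $C = A, 2A, \ldots, (k-1)A$ gives $|X + kA| \leq (K')^k |X|$, and likewise $|X + lA| \leq (K')^l |X|$. Now apply Lemma~\ref{triangle} to the triple $(kA, lA, -X)$, which, using $|{-X}| = |X|$ and $kA - (-X) = kA + X$, yields
$$|kA - lA| \cdot |X| \leq |kA + X| \cdot |lA + X| \leq (K')^{k+l} |X|^2,$$
so $|kA - lA| \leq (K')^{k+l} |X| \leq K^{k+l} |A| = |A+A|^{k+l}/|A|^{k+l-1}$, as required. The main obstacle is Petridis's lemma itself: the inclusion--exclusion must be arranged so that minimality of $K'$ is invoked on a subset of $X$ as a lower bound on $|F+A|$, which is the only direction in which minimality supplies information. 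A naive attempt to control the ``new'' elements $(X+A+c) \setminus (X+A+C')$ directly by $|(X \setminus F) + A|$ would require an upper bound in the wrong direction and therefore fails.
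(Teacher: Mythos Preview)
Your proof is correct and is precisely the Petridis argument from \cite{P} that the paper cites; the paper itself gives no proof of Lemma~\ref{Plun} but simply refers the reader to \cite{P}, so your proposal matches the intended approach exactly.
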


We will also use the following variant, which is Corollary 1.5 in Katz-Shen \cite{KS}. The result was originally stated for subsets of the additive group $\mathbb F_p$, but the proof is valid for any abelian group.

\begin{lemma} \label{KSPlun} Let $X,B_1,\dots,B_k$ be subsets of an abelian group $(G,+)$. Then there exists $X' \subset X$ such that $|X'| \geq |X|/2$ and 
$$|X'+B_1+\cdots+B_k| \ll \frac{|X+B_1||X+B_2|\cdots |X+B_k|}{|X|^{k-1}}.$$
\end{lemma}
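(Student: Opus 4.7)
The plan is to combine the Petridis proof of Pl\"{u}nnecke--Ruzsa with a pigeonholing argument ensuring the extracted subset is at least half the size of $X$.

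First, I would establish the standard asymmetric Pl\"{u}nnecke--Ruzsa inequality: there exists a nonempty $Y_0 \subseteq X$ with $|Y_0 + B_1 + \cdots + B_k| \leq |X+B_1|\cdots|X+B_k|/|X|^{k-1}$. This follows by iterating Petridis's key lemma -- the nonempty minimizer $Y \subseteq X$ of $|Y'+B|/|Y'|$ satisfies $|Y + B + C| \leq (|X+B|/|X|)|Y+C|$ for all finite $C$ -- with respect to $B_1, \ldots, B_k$ in turn, peeling off one summand at each stage (and updating the minimizer along the way, so that the final bound involves $|X+B_i|/|X|$ for every $i$ rather than the analogous ratio on a shrinking subset).

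Next, to upgrade from a nonempty $Y_0$ to a subset $X'$ with $|X'| \geq |X|/2$, I would iterate this extraction on the complement. Setting $X^{(1)} = X$ and extracting $Y^{(1)} = Y_0$, then recursing with $X^{(j+1)} = X^{(j)} \setminus Y^{(j)}$ (using $|X^{(j)} + B_i| \leq |X + B_i|$ at each stage), one accumulates disjoint pieces until their union $X' = Y^{(1)} \cup \cdots \cup Y^{(m)}$ reaches size at least $|X|/2$. By subadditivity of sumsets,
$$|X' + B_1 + \cdots + B_k| \leq m \cdot \frac{|X+B_1|\cdots|X+B_k|}{|X|^{k-1}}.$$

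The main obstacle is bounding the iteration count $m$ by an absolute constant (depending on $k$ but not on $|X|$). The Petridis minimizer can in principle be a single element, so the naive iteration could lose a factor of $\log|X|$ or worse. The fix -- essentially the Katz--Shen trick -- is to refine the extraction so that each $Y^{(j)}$ occupies a constant fraction of the current remaining set, via a dyadic pigeonhole on the representation multiplicities in the associated Pl\"{u}nnecke graph. With this refinement in place, a geometric-series argument shows that the accumulated union reaches $|X|/2$ after only a constant number of steps, so $m = O_k(1)$ and the required bound $|X' + B_1 + \cdots + B_k| \ll |X+B_1|\cdots|X+B_k|/|X|^{k-1}$ follows.
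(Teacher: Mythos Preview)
The paper does not prove this lemma; it is quoted as Corollary 1.5 of Katz--Shen \cite{KS}. So there is no ``paper's proof'' to compare against, and I will evaluate your sketch on its own.

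Your overall strategy---extract a Pl\"unnecke subset, remove it, and repeat on the complement until half of $X$ is covered---is exactly the Katz--Shen argument. The genuine gap is in how you record the asymmetric Pl\"unnecke step. You state it as
\[
|Y_0 + B_1 + \cdots + B_k| \leq \frac{|X+B_1|\cdots|X+B_k|}{|X|^{k-1}},
\]
and this is what forces the factor $m$ when you sum over the extracted pieces. But the asymmetric Pl\"unnecke inequality actually gives the sharper conclusion
\[
|Y_0 + B_1 + \cdots + B_k| \leq \left(\prod_{i=1}^k \frac{|X+B_i|}{|X|}\right)|Y_0|,
\]
with the factor $|Y_0|$ on the right. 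Using this form at each stage (and the fact that the remaining set has size at least $|X|/2$, so each ratio $|X^{(j)}+B_i|/|X^{(j)}|$ is at most $2|X+B_i|/|X|$), the sum over the disjoint pieces $Y^{(j)}$ is controlled by $\sum_j |Y^{(j)}| \leq |X|$ rather than by $m$. One obtains directly
\[
|X' + B_1 + \cdots + B_k| \leq 2^k\,\frac{|X+B_1|\cdots|X+B_k|}{|X|^{k-1}},
\]
with no need to bound the number of iterations.

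In short, your worry about $m$ is an artefact of dropping the $|Y_0|$ factor too early; the ``dyadic pigeonhole on the Pl\"unnecke graph'' you propose as a fix is vague as written and, more to the point, unnecessary once the correct form of the inequality is used.
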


We will need various existing results for expanders. The first is due to Garaev and Shen \cite{GS}.
\begin{lemma} \label{GS} Let $X,Y,Z \subset \mathbb R$ and $\alpha \in \mathbb R \setminus \{0\}$. Then
$$|XY||(X+\alpha)Z| \gg |X|^{3/2}|Y|^{1/2}|Z|^{1/2}.$$
In particular,
\begin{equation}
|X(X+\alpha)| \gg |X|^{5/4}
\label{GS1}
\end{equation}
and
\begin{equation}
\max \{|XY|,|(X+\alpha)Y| \} \gg |X|^{3/4}|Y|^{1/2}.
\label{GS2}
\end{equation}
\end{lemma}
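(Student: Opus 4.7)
The plan is to derive the main inequality from the Szemer\'edi--Trotter incidence theorem, choosing a point-line configuration adapted to the shift $\alpha$. Introduce the point set
$$P=\{(xy,(x+\alpha)z)\;:\;x\in X,\,y\in Y,\,z\in Z\}\subseteq XY\times(X+\alpha)Z,$$
so $|P|\leq|XY|\cdot|(X+\alpha)Z|$, and for each pair $(y,z)\in Y\times Z$ (discarding $0$ from $Y$ and $Z$ if needed, which changes the sets by $O(1)$) the line
$$\ell_{y,z}\colon\; v=\frac{z}{y}\,u+\alpha z.$$
For each fixed $(y,z)$, the line $\ell_{y,z}$ contains the point $(xy,(x+\alpha)z)$ as $x$ ranges over $X$, and these $|X|$ points are distinct on the line. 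Hence the total number of incidences satisfies $I(P,L)\geq|X||Y||Z|$.

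The critical combinatorial point is that all $|Y||Z|$ lines are \emph{distinct}, and this is where the hypothesis $\alpha\neq 0$ is used in an essential way. If $\ell_{y_1,z_1}=\ell_{y_2,z_2}$, then equality of $v$-intercepts $\alpha z_1=\alpha z_2$ forces $z_1=z_2$; with $z_1=z_2$, equality of slopes $z_1/y_1=z_2/y_2$ forces $y_1=y_2$. Thus $|L|=|Y||Z|$.

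Applying the Szemer\'edi--Trotter bound $I(P,L)\ll(|P||L|)^{2/3}+|P|+|L|$ and analysing the three regimes: if the first term dominates, rearranging $|X||Y||Z|\ll(|P|\cdot|Y||Z|)^{2/3}$ yields
$$|XY|\cdot|(X+\alpha)Z|\geq|P|\gg|X|^{3/2}|Y|^{1/2}|Z|^{1/2},$$
which is the desired bound. If $|L|$ dominates then $|X|=O(1)$ and the bound is trivial; if $|P|$ dominates then either $|Y||Z|\geq|X|$ (so $|X||Y||Z|\geq|X|^{3/2}|Y|^{1/2}|Z|^{1/2}$) or $|Y||Z|<|X|$ (so the trivial lower bound $|XY|\cdot|(X+\alpha)Z|\geq|X|^{2}\geq|X|^{3/2}|Y|^{1/2}|Z|^{1/2}$ suffices). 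So the estimate holds in every case.

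The two corollaries are immediate specialisations. For \eqref{GS1} set $Y=X+\alpha$ and $Z=X$, giving $|X(X+\alpha)|^{2}\gg|X|^{5/2}$. For \eqref{GS2} set $Z=Y$; then $|XY|\cdot|(X+\alpha)Y|\gg|X|^{3/2}|Y|$ forces $\max\{|XY|,|(X+\alpha)Y|\}\gg|X|^{3/4}|Y|^{1/2}$. The main thing to be careful about is the distinctness of the lines and the handling of the subcases of Szemer\'edi--Trotter; the choice of slope $z/y$ and intercept $\alpha z$ is what makes the line count exactly $|Y||Z|$, and this is the only nontrivial ingredient.
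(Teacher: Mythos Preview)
The paper does not give its own proof of this lemma: it quotes the result from Garaev--Shen, remarks that the original case $\alpha=1$ extends unchanged to arbitrary $\alpha\neq 0$, and later notes that ``each of the three latter results come from simple applications of the Szemer\'edi--Trotter Theorem.'' Your argument is precisely that standard Elekes-type Szemer\'edi--Trotter application, and it is correct; the distinctness of the lines (via the intercept $\alpha z$) is exactly the place where $\alpha\neq 0$ is used, and your handling of the three regimes and the two specialisations \eqref{GS1}, \eqref{GS2} is fine.
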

Note that Lemma \ref{GS} was originally stated only for $\alpha=1$, but the proof extends without alteration to hold for an arbitrary non-zero real number $\alpha$. A similar and earlier result of Elekes, Nathanson and Ruzsa \cite{ENR} will also be used.
\begin{lemma} \label{ENR} Let $f: \mathbb R \rightarrow \mathbb R$ be a strictly convex or concave function and let $X,Y,Z \subset \mathbb R$. Then
$$|f(X)+Y||X+Z| \gg |X|^{3/2}|Y|^{1/2}|Z|^{1/2}.$$
\end{lemma}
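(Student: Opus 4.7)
The plan is to derive Lemma~\ref{ENR} via a Szemer\'edi--Trotter incidence bound applied to a family of translates of the graph of $f$; this is essentially the Elekes--Nathanson--Ruzsa strategy.

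First I would set up the point--curve configuration. Consider the point set $P=(X+Z)\times(f(X)+Y)\subset\R^2$, so $|P|\leq |X+Z|\cdot|f(X)+Y|$. For each $(y,z)\in Y\times Z$ define the curve
$$\gamma_{y,z}=\{(u+z,\,f(u)+y):u\in X\},$$
a $|X|$-point subset of $P$. Summing over all $(y,z)\in Y\times Z$, the number $I$ of point--curve incidences between $P$ and this family of $|Y|\cdot|Z|$ curves satisfies $I\geq |X|\cdot|Y|\cdot|Z|$.

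Next I would check that the family behaves like a system of pseudo-lines: any two distinct curves $\gamma_{y_1,z_1}$ and $\gamma_{y_2,z_2}$ meet in at most one point. The case $z_1=z_2$ forces $y_1\neq y_2$, and the two curves are disjoint vertical translates. When $z_1\neq z_2$, a common point at abscissa $t$ forces $f(t-z_1)-f(t-z_2)=y_2-y_1$, and the strict monotonicity of $f'$ (from convexity or concavity of $f$) makes the left-hand side strictly monotone in $t$, so at most one such $t$ exists. The Szemer\'edi--Trotter bound for pseudo-lines then gives
$$I\ll (|P|\cdot|Y|\cdot|Z|)^{2/3}+|P|+|Y|\cdot|Z|.$$

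Finally I would combine the two bounds on $I$ and inspect which term dominates. The main Szemer\'edi--Trotter term yields $|X+Z|^{2}|f(X)+Y|^{2}\gg|X|^{3}|Y||Z|$, which is the desired estimate. If instead the $|Y||Z|$ term dominates, then $|X|=O(1)$ and the claim is trivial. If the $|P|$ term dominates we get $|X+Z||f(X)+Y|\gg |X||Y||Z|$; this implies the lemma when $|Y||Z|\geq |X|$, while the regime $|Y||Z|<|X|$ is covered by the trivial bound $|X+Z||f(X)+Y|\geq|X|^{2}\geq|X|^{3/2}(|Y||Z|)^{1/2}$. The only delicate ingredient is the pseudo-line verification, since the incidence bound must be applied to a non-linear family of curves; the remaining case analysis is routine bookkeeping.
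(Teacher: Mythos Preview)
The paper does not actually prove Lemma~\ref{ENR}; it quotes the result from Elekes--Nathanson--Ruzsa \cite{ENR} and merely remarks that it follows from a simple application of the Szemer\'edi--Trotter Theorem. Your argument is precisely that standard proof, and it is correct.

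Two small points worth tightening. First, your pseudo-line verification invokes the strict monotonicity of $f'$, but strictly convex (or concave) functions need not be differentiable; the cleaner statement is that for $z_1\neq z_2$ the map $t\mapsto f(t-z_1)-f(t-z_2)$ is strictly monotone, which follows directly from the fact that slopes of chords of a strictly convex function are strictly increasing. Second, in the final trivial case you write $|X+Z||f(X)+Y|\geq |X|^2$, which tacitly uses $|f(X)|\geq |X|$; a strictly convex function can be $2$-to-$1$ (e.g.\ $x\mapsto x^2$), so one only has $|f(X)|\geq |X|/2$. This still gives $|X+Z||f(X)+Y|\geq |X|^2/2$, which is enough, so the argument survives with the implied constant absorbed.
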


Define
$$R[A]:=\left \{\frac{a-b}{a-c} :a,b,c \in A \right \}.$$
The following result is due to Jones \cite{TJthesis}. An alternative proof can be found in \cite{ORN2}.
\begin{lemma} \label{3var} Let $A \subset \mathbb R$. Then
$$|R[A]| \gg \frac{|A|^2}{\log |A|}.$$
\end{lemma}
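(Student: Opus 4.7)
My plan is to combine Cauchy--Schwarz with the Szemer\'edi--Trotter theorem, reducing the problem to bounding the number of collinear triples in the grid $A \times A$. For each $r \in R[A]$ I will define the representation function
$$N(r) := \bigl|\{(a,b,c) \in A^3 : a \neq c, \ (a-b)/(a-c) = r\}\bigr|.$$
Since $\sum_{r} N(r) = |A|^2(|A|-1)$, Cauchy--Schwarz gives
$$|R[A]| \geq \frac{\bigl(\sum_{r} N(r)\bigr)^2}{\sum_{r} N(r)^2} \gg \frac{|A|^6}{E}, \qquad E := \sum_{r} N(r)^2,$$
so it suffices to show $E \ll |A|^4 \log|A|$.

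The key step is to recognise $E$ as a count of collinear triples. By definition, $E$ counts sextuples $(a_1, b_1, c_1, a_2, b_2, c_2) \in A^6$ with $a_1 \neq c_1$, $a_2 \neq c_2$ satisfying $(a_1 - b_1)(a_2 - c_2) = (a_1 - c_1)(a_2 - b_2)$. Rearranging this as $(b_1 - a_1)(c_2 - a_2) = (c_1 - a_1)(b_2 - a_2)$, one sees it is exactly the collinearity condition for the three points $(a_1, a_2),\ (b_1, b_2),\ (c_1, c_2)$ in the grid $P := A \times A$. Hence $E$ is bounded by the number of ordered collinear triples of points in $P$.

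To bound the latter, I will apply Szemer\'edi--Trotter in the form that the number of lines meeting $P$ in at least $k$ points is $O(|A|^4/k^3 + |A|^2/k)$, and use the trivial bound that any non-vertical line contains at most $|A|$ points of $P$ (one per $x$-coordinate; vertical and horizontal lines are handled separately). Writing $t_k$ for the number of lines with exactly $k$ points of $P$ and summing $\sum_k k^3 t_k$ dyadically over $2^j \leq k < 2^{j+1}$ for $0 \leq j \leq \log|A|$,
$$\sum_{\ell} |P\cap\ell|^3 \ll \sum_{j=0}^{\log|A|} 2^{3j}\Bigl(\frac{|A|^4}{2^{3j}} + \frac{|A|^2}{2^{j}}\Bigr) \ll |A|^4 \log|A| + |A|^4.$$
Combined with the Cauchy--Schwarz bound this yields $|R[A]| \gg |A|^2/\log|A|$.

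The main obstacle to watch is that the logarithm arises in precisely one place: the first Szemer\'edi--Trotter term contributes $|A|^4$ at each of the $\log|A|$ scales, while the second term is a geometric series in $j$ which is bounded by $|A|^4$ only because the cutoff $k \leq |A|$ prevents it from running away. Without exploiting that points in $A \times A$ on a single line are limited by $|A|$, the estimate on $E$ would deteriorate and the target lower bound would be lost.
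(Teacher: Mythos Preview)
The paper does not give a proof of this lemma; it cites Jones's thesis \cite{TJthesis} and the alternative argument in \cite{ORN2}, remarking only that the result follows from a simple application of the Szemer\'edi--Trotter theorem. Your proof is correct and is essentially the standard argument underlying the cited references.

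A couple of remarks on details. Your identification of $E=\sum_r N(r)^2$ with (a subset of) the ordered collinear triples in $A\times A$ is correct: the condition $(b_1-a_1)(c_2-a_2)=(c_1-a_1)(b_2-a_2)$ is exactly collinearity of $(a_1,a_2),(b_1,b_2),(c_1,c_2)$, and the constraints $a_1\neq c_1$, $a_2\neq c_2$ only force the first and third points to be distinct. The $O(|A|^4)$ degenerate triples in which $(b_1,b_2)$ coincides with one of the other two points are absorbed into the main term. Your dyadic Szemer\'edi--Trotter estimate with the cutoff $k\le |A|$ then gives $\sum_\ell |P\cap\ell|^3\ll |A|^4\log|A|$ exactly as you wrote (the sum being over lines meeting $P$ in at least two points), and Cauchy--Schwarz finishes the proof.

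For comparison, the alternative proof in \cite{ORN2} that the paper mentions proceeds slightly differently: it shows directly that for some $a,b\in A$ one has $|(A-a)(A-b)|\gg |A|^2/\log|A|$, which implies the bound on $|R[A]|$ since $(A-a)/(A-b)\subseteq R[A]$. The paper actually uses this stronger pointwise consequence (see the remark following the lemma), so that route buys a bit more; your collinear-triples argument, on the other hand, is the more transparent path to the lemma as stated.
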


Each of the three latter results come from simple applications of the Szemer\'{e}di-Trotter Theorem.

Note that the proof of Lemma \ref{3var} also implies that there exists $a,b \in A$ such that
\begin{equation}
|(A-a)(A-b)| \gg \frac{|A|^2}{\log |A|}.
\label{Mink2}
\end{equation}
See \cite{ORN2} for details. In particular, this gives a shorter proof of inequality \eqref{Mink}, requiring only a simple application of the Szemer\'{e}di-Trotter Theorem. The inequality \eqref{Mink} will also be used in the proof of Theorem \ref{thm:main1}.

An important tool in this paper is the following result of Lund \cite{L}, which gives an improvement on \eqref{ratiosum} unless the ratio set $A/A$ is very large.
\begin{lemma} \label{lund} Let $A \subset \mathbb R$. Then
$$\left |\frac{A+A}{A+A}\right | \gg \frac{|A|^2}{\log |A|} \left( \frac{|A|^2}{|A/A|} \right)^{1/8}.$$
\end{lemma}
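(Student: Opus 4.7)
The strategy follows the Cauchy--Schwarz device for ratio sets from Balog--Roche-Newton, refined by an incidence bound that exploits the smallness of $|A/A|$. Set $R:=(A+A)/(A+A)$ and, for each $r\in R$, define the representation function
$$T(r):=|\{(a,b,c,d)\in A^4 : a+b = r(c+d)\}|.$$
Since every quadruple with $c+d\neq 0$ contributes to a unique $T(r)$, one has $\sum_{r\in R}T(r)\gg |A|^4$, and Cauchy--Schwarz gives
$$|R|\;\geq\;\frac{|A|^8}{M},\qquad M:=\sum_{r\in R}T(r)^2.$$
The entire task is therefore to prove $M\ll |A|^{23/4}|A/A|^{1/4}\log^2|A|$; plugging this in recovers the claimed bound, with the $1/8$ exponent arising as the fourth-root of the $|A/A|^{-1/4}$ improvement in the denominator.

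The quantity $M$ is the number of octuples $(a_i,b_i,c_i,d_i)_{i=1,2}\in A^8$ with $(a_1+b_1)(c_2+d_2)=(a_2+b_2)(c_1+d_1)$, i.e.\ the number of pairs of points in $(A+A)^2$ lying on a common line through the origin. I would expand $M$ dyadically: group octuples by the magnitude of the representation functions $r_{A+A}$ and $r_{A/A}$ that appear, extract a popular dyadic regime, and pass (losing only logarithms) to a subset where all representation multiplicities are of comparable size. The key trick is to use an intermediate Cauchy--Schwarz that reinterprets the defining equation via the identity $(a_1+b_1)/(a_2+b_2)=(c_1+d_1)/(c_2+d_2)$ and forces the common ratio to have large representation count as an element of $A/A$. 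This way the relevant slopes are drawn from $A/A$ rather than from all of $R$.

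With the slopes restricted to a popular dyadic piece of $A/A$, the problem reduces to an incidence count between a weighted point set inside $(A+A)^2$ and a family of lines through the origin of slopes in $A/A$. Applying the Szemer\'{e}di--Trotter theorem $I(P,L)\ll(|P||L|)^{2/3}+|P|+|L|$ here is what produces the $(|A|^2/|A/A|)^{1/8}$ gain: the factor of $|A/A|$ appears linearly in $|L|$, and after the $2/3$-power of Szemer\'{e}di--Trotter, a further Cauchy--Schwarz to pass back from weighted to unweighted count leaves exactly $|A/A|^{1/4}$ on the right of the bound on $M$. Combining the Szemer\'{e}di--Trotter estimate with the earlier reductions yields the required upper bound on $M$, and hence the lemma.

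The main obstacle is Step 3: choosing the correct point/line roles so that Szemer\'{e}di--Trotter's nontrivial term dominates and the slope set is (after a Cauchy--Schwarz that can be paid for) contained in $A/A$. One must balance the dyadic losses from $r_{A+A}$ against those from $r_{A/A}$, and verify that the Szemer\'{e}di--Trotter regime $(|P||L|)^{2/3}$ genuinely wins over the trivial terms $|P|+|L|$ in the relevant range of parameters. Getting the exponent $1/8$ rather than something smaller is a matter of not wasting Cauchy--Schwarz steps; this is the delicate point where Lund's contribution lies, and I would consult \cite{L} at this stage rather than try to rederive the optimal bookkeeping from scratch.
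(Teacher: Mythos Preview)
The paper does not prove this lemma; it quotes it from Lund \cite{L} and uses it as a black box. However, the paper reconstructs Lund's argument in full in the proof of Theorem~\ref{thm:main4}, so one can compare. Lund's method is \emph{not} an energy/Cauchy--Schwarz bound on a collision count $M$. It is a direct constructive argument: view $A\times A$ as a point set, cover it by lines through the origin with slopes in $A/A$, and pigeonhole to a set $S_\tau$ of slopes each supporting $\approx\tau$ points. The elementary observation that a vector sum of a point on the line of slope $\lambda_i$ with a point on the line of slope $\lambda_{i+1}$ has slope strictly between $\lambda_i$ and $\lambda_{i+1}$ already gives $\gg |A||S_\tau|$ distinct elements of $(A+A)/(A+A)$. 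The gain $(|A|^2/|A/A|)^{1/8}$ comes from the Konyagin--Shkredov clustering trick: one groups $S_\tau$ into consecutive blocks of size $2M$, and within each block uses the Lov\'asz Local Lemma together with the Pach--Sharir incidence bound (Lemma~\ref{thm:PS}) to select basepoints so that the $M^2|A|$ slopes produced have controlled overlap. Optimising $M\asymp(\tau^2/|A|)^{1/8}$ yields the exponent $1/8$. There is no Szemer\'edi--Trotter step applied to lines with slopes in $A/A$, and no bound on $\sum_r T(r)^2$.

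Your sketch has a genuine gap at exactly the point you flag as delicate. The collision count $M$ records solutions to $(a_1+b_1)(c_2+d_2)=(a_2+b_2)(c_1+d_1)$, and the common ratio $(a_1+b_1)/(a_2+b_2)=(c_1+d_1)/(c_2+d_2)$ lives in $(A+A)/(A+A)$, not in $A/A$; no Cauchy--Schwarz step ``forces'' it into $A/A$, because the variables on both sides are sums, not single elements of $A$. Consequently the slope set in your proposed incidence setup has size $|(A+A)/(A+A)|$, not $|A/A|$, and the argument becomes circular. (As a numerical sanity check, your claimed bound $M\ll |A|^{23/4}|A/A|^{1/4}\log^2|A|$ would give $|R|\gg |A|^{9/4}|A/A|^{-1/4}/\log^2|A|$, which has exponent $-1/4$ on $|A/A|$ rather than the $-1/8$ in the lemma --- stronger, not equivalent, and a sign that the route does not close.) The role of $A/A$ in Lund's proof is geometric, as the set of directions through the origin meeting $A\times A$, and enters before any sums are taken; it is not recovered from the energy expression after the fact.
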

In fact, a closer examination of the proof of Lemma \ref{lund} reveals that it can be generalised without making any meaningful changes to give the following statement.
\begin{lemma} \label{lund2} Let $A,B \subset \mathbb R$. Then
$$\left |\frac{A+A}{B+B}\right | \gg \frac{|A||B|}{\log |A|+ \log|B|} \left( \frac{|A||B|}{|A/B|} \right)^{1/8}.$$
\end{lemma}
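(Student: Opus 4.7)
The plan is to audit Lund's proof of Lemma \ref{lund} and confirm that every step generalises from one set to two. The high-level skeleton of Lund's argument is: reduce the lower bound on $|(A+A)/(A+A)|$ to an upper bound on a multiplicative energy of $A+A$ with itself, then bound that energy by recasting it as a point--line incidence question and applying the Szemer\'edi--Trotter theorem together with a dyadic decomposition over the richness of the lines. My plan is to repeat this skeleton with $A$ replaced by $B$ in every slot where a sum of $B$'s appears.

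First I would open the argument by applying the Cauchy--Schwarz inequality to the representation function
$$r(x) \;=\; \#\bigl\{(a_1, a_2, b_1, b_2) \in A^2 \times B^2 : a_1 + a_2 = x(b_1 + b_2)\bigr\},$$
yielding
$$\left|\frac{A+A}{B+B}\right| \cdot E \;\geq\; \bigl(|A|^2|B|^2 - O(|A|^2 |B|)\bigr)^2,$$
where $E := \sum_x r(x)^2$ is the asymmetric multiplicative energy of $A+A$ and $B+B$ and the correction term handles the degenerate case $b_1 + b_2 = 0$. This reduces the task to an upper bound on $E$. Expanding, $E$ counts the octuples $(a_1,\dots,a_4,b_1,\dots,b_4) \in A^4 \times B^4$ satisfying $(a_1+a_2)(b_3+b_4) = (a_3+a_4)(b_1+b_2)$, which after the usual reparametrisation becomes the number of pairs of points of $A\times B$ (up to a sign flip on both coordinates) sharing a common line of some slope $r \in (A+A)/(B+B)$.

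Second, I would translate the bound on $E$ into a point--line incidence problem exactly as in Lund's paper. The essential refinement there is that the rich slopes lie in the ratio set $A/B$ and so there are at most $|A/B|$ of them; combining Szemer\'edi--Trotter with a dyadic pigeonhole over the line richness extracts a saving involving $|A/B|$. Substituting the resulting energy bound back into the Cauchy--Schwarz inequality yields the desired lower bound on $|(A+A)/(B+B)|$, with the logarithmic loss $\log|A|+\log|B|$ coming from the dyadic decomposition.

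The main and essentially only obstacle is to verify that no step of Lund's argument silently invokes the symmetry $A = B$. The reduction to incidence geometry is, however, naturally bilinear: the points lie in a Cartesian product of the form $A \times B$ (up to signs) and the slopes range over $A/B$, and the Szemer\'edi--Trotter estimate is insensitive to whether the two coordinate sets coincide. Replacing $|A|^2$ by $|A||B|$ throughout the counting, and $\log|A|$ by $\log|A| + \log|B|$, one arrives at the claimed inequality with the improvement factor $\bigl(|A||B|/|A/B|\bigr)^{1/8}$ in place of Lund's $\bigl(|A|^2/|A/A|\bigr)^{1/8}$.
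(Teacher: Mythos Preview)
Your meta-plan --- audit Lund's argument for $A=B$ and verify that nothing depends on that symmetry --- is exactly what the paper does; indeed the paper offers no more than the sentence ``a closer examination of the proof of Lemma~\ref{lund} reveals that it can be generalised without making any meaningful changes.'' So at that level you and the paper agree.

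However, your sketch of \emph{what} Lund's argument is does not match the actual proof. Lund does not pass through Cauchy--Schwarz and a bound on the multiplicative energy of $A+A$ with $B+B$. His argument is the one reconstructed later in this paper for Theorem~\ref{thm:main4}: work directly with the point set $A\times B$ and the lines through the origin with slopes in $A/B$; pigeonhole dyadically on the richness $\tau$ of these lines to find a popular slope set $S_\tau$; note the elementary bound that between consecutive slopes one already finds many distinct elements of $(A+A)/(B+B)$; then amplify this by the Konyagin--Shkredov clustering trick, partitioning $S_\tau$ into consecutive blocks of size $2M$, and using the Lov\'asz Local Lemma (Lemma~\ref{thm:lovazs}) together with the Pach--Sharir curve incidence bound (Lemma~\ref{thm:PS}) to choose representatives so that collisions between the $M^2$ pairs in a block are controlled. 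The exponent $1/8$ arises from the optimisation $M \asymp (\tau^2/|A|)^{1/8}$ in that argument, not from any energy estimate. The only role of $A/B$ is that the slopes live there, so $\tau|S_\tau|\gg |A||B|/\log(|A||B|)$ with $\tau \geq |A||B|/(2|A/B|)$, and this is manifestly symmetric in $A$ and $B$.

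So the gap is not in the strategy but in the content: the Cauchy--Schwarz/energy route you outline is a different argument, and it is not clear it produces the factor $(|A||B|/|A/B|)^{1/8}$ at all. To carry out the intended audit you should instead walk through the clustering/Local Lemma proof (as in Section~5 of this paper, with $A\times A$ replaced by $A\times B$) and check that each step is bilinear in $A,B$ --- which it is.
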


The proofs of Theorems \ref{thm:main2} and \ref{thm:main3} use Lemma \ref{lund2} as a black box. However, for the proof of Theorem \ref{thm:main4} we need to dissect the methods from \cite{L} in more detail and reconstruct a variant of the argument for our problem. To do this, we will also need the following tools which were used in \cite{L}.  The first is a generalisation of the Szemer\'{e}di-Trotter Theorem to certain well-behaved families of curves. A more general version of this result can be found in Pach-Sharir \cite{PS}.

\begin{lemma} \label{thm:PS}
Let $\mathcal P$ be an arbitrary point set in $\mathbb R^2$. Let $\mathcal L$ be a family of curves in $\mathbb R^2$ such that 
\begin{itemize}
\item any two distinct curves from $\mathcal L$ intersect in at most two points and
\item for any two distinct points $p,q \in \mathcal P$, there exist at most two curves from $\mathcal L$ which pass through both $p$ and $q$.
\end{itemize}
Let $K \geq 2$ be some parameter and define $\mathcal L_K:=\{l \in \mathcal L : |l \cap \mathcal P| \geq K \}$. Then
$$|\mathcal L_K| \ll \frac{|\mathcal P|^2}{K^3}+\frac{|\mathcal P|}{K}.$$
\end{lemma}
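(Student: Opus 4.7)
The plan is to deduce the bound by first establishing a Szemer\'edi--Trotter-type incidence bound for the curve family, namely
$$I(\mathcal P, \mathcal L) \ll |\mathcal P|^{2/3}|\mathcal L|^{2/3} + |\mathcal P| + |\mathcal L|,$$
and then applying it to $\mathcal L_K$ via a standard popularity argument. The two hypotheses on $\mathcal L$ are exactly the structural conditions that make Sz\'ekely's crossing-lemma proof of Szemer\'edi--Trotter go through for curves rather than lines.

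For the incidence bound, I would form a topological graph $G$ in the plane: the vertex set is $\mathcal P$, and for each curve $l \in \mathcal L$ meeting $\mathcal P$ in $k_l \geq 1$ points, I insert $k_l - 1$ edges along $l$ joining consecutive points in a parametrisation of $l$. The total number of edges is $m = I(\mathcal P, \mathcal L) - |\mathcal L|$, and the number of vertices is $n = |\mathcal P|$. The second hypothesis says that any pair of points lies on at most two curves, so the edge multiplicity of $G$ is at most $2$. The first hypothesis says that any two curves meet in at most two points, so the natural drawing of $G$ has at most $2\binom{|\mathcal L|}{2} \ll |\mathcal L|^2$ crossings. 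Applying the crossing-number inequality for multigraphs of bounded multiplicity (which still gives $\mathrm{cr}(G) \gg m^3/n^2$ provided $m \gg n$) yields
$$m^3/n^2 \ll |\mathcal L|^2,$$
hence $m \ll |\mathcal P|^{2/3}|\mathcal L|^{2/3}$, and adding back the trivial bounds $m \leq I \leq K_0(|\mathcal P| + |\mathcal L|)$ from the regimes excluded by the crossing lemma yields the claimed incidence estimate.

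To finish, apply the incidence bound to the subfamily $\mathcal L_K$. By definition of $\mathcal L_K$,
$$K |\mathcal L_K| \leq I(\mathcal P, \mathcal L_K) \ll |\mathcal P|^{2/3}|\mathcal L_K|^{2/3} + |\mathcal P| + |\mathcal L_K|.$$
Since $K \geq 2$, the $|\mathcal L_K|$ error term can be absorbed into the left-hand side, leaving
$$K |\mathcal L_K| \ll |\mathcal P|^{2/3}|\mathcal L_K|^{2/3} + |\mathcal P|.$$
Whichever of the two right-hand terms dominates gives one of the summands in the statement: the first yields $|\mathcal L_K| \ll |\mathcal P|^2/K^3$, the second yields $|\mathcal L_K| \ll |\mathcal P|/K$.

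The main obstacle is the clean application of the crossing lemma for graphs with edge multiplicity up to $2$: one must verify that the standard $m^3/n^2$ bound (rather than the weaker $m^3/(\mu^2 n^2)$ for multiplicity $\mu$) is still usable here, which is fine because a multiplicity of $2$ only costs a harmless constant factor. A secondary technical point is the parametrisation step when constructing $G$: for this one needs the curves to be piecewise one-dimensional so that ``consecutive points along $l$'' makes sense, which is implicit in the setup of Pach--Sharir \cite{PS} and can be arranged by a harmless general-position perturbation if required.
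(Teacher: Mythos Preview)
Your argument is correct. The paper does not actually prove this lemma: it is quoted as a known result, with a pointer to Pach--Sharir \cite{PS} for a more general statement, and is used as a black box in the proof of Theorem~\ref{thm:main4}. What you have written is precisely the standard Sz\'ekely crossing-number derivation of the Pach--Sharir incidence bound for curves with two degrees of freedom and multiplicity two, followed by the routine rich-curve extraction; both steps are sound, and your remarks about the constant-factor loss from multiplicity $2$ and the need for a well-defined ordering of points along each curve are accurate caveats rather than gaps.
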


We will need the following version of the Lov\'{a}sz Local Lemma. This precise statement is Corollary 5.1.2 in \cite{AS}.
\begin{lemma} \label{thm:lovazs} Let $A_1, A_2,\dots, A_n$ be events in an arbitrary probability
space. Suppose that each event $A_i$ is mutually independent from all but at most $d$
of the events $A_j$ with $j \neq i$. Suppose also that the probability of the event $A_i$
occuring is at most $p$ for all $1 \leq i \leq n$. Finally, suppose that
$$ep(d+1) \leq 1.$$
Then, with positive probability, none of the events $A_1,\dots,A_n$ occur.
\end{lemma}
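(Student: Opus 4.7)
The plan is to prove the symmetric statement by first establishing a stronger asymmetric form of the Lov\'{a}sz Local Lemma, which lends itself much more naturally to an inductive argument. The asymmetric form reads as follows: if to each event $A_i$ one can associate a real number $x_i \in [0,1)$ such that $\Pr[A_i] \leq x_i \prod_{j \in N(i)} (1-x_j)$, where $N(i)$ denotes a set of indices $j \neq i$ containing all those on which $A_i$ might depend, then
$$\Pr\left[\bigcap_{i=1}^n \overline{A_i}\right] \geq \prod_{i=1}^n (1-x_i) > 0.$$
Once this is in hand, the stated lemma follows by choosing $x_i = 1/(d+1)$ uniformly and invoking the elementary estimate $(1-1/(d+1))^d \geq 1/e$; since $|N(i)| \leq d$, this converts the hypothesis $ep(d+1) \leq 1$ into the required product condition $\Pr[A_i] \leq p \leq x_i \prod_{j \in N(i)}(1-x_j)$.

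The heart of the matter is the following claim, which I would prove by induction on $|S|$: for every $S \subseteq \{1,\dots,n\}$ and every index $i \notin S$,
$$\Pr\left[A_i \,\middle|\, \bigcap_{j \in S} \overline{A_j}\right] \leq x_i.$$
The base case $S = \emptyset$ is just the hypothesis $\Pr[A_i] \leq x_i$. For the inductive step I would partition $S = S_1 \sqcup S_2$ with $S_1 = S \cap N(i)$ and $S_2 = S \setminus N(i)$, and rewrite the conditional probability as the ratio
$$\frac{\Pr\bigl[A_i \cap \bigcap_{j \in S_1} \overline{A_j} \,\bigm|\, \bigcap_{j \in S_2} \overline{A_j}\bigr]}{\Pr\bigl[\bigcap_{j \in S_1} \overline{A_j} \,\bigm|\, \bigcap_{j \in S_2} \overline{A_j}\bigr]}.$$

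For the numerator, mutual independence of $A_i$ from the family indexed by $S_2$ lets me drop the conditioning on the factor $A_i$, yielding the upper bound $\Pr[A_i] \leq x_i \prod_{j \in N(i)}(1-x_j)$. For the denominator I would order $S_1 = \{j_1,\dots,j_r\}$, expand by the chain rule as $\prod_{\ell=1}^r \Pr[\overline{A_{j_\ell}} \mid \bigcap_{k<\ell} \overline{A_{j_k}} \cap \bigcap_{j \in S_2} \overline{A_j}]$, and bound each factor from below by $1 - x_{j_\ell}$ via the inductive hypothesis, which is legitimate because each conditioning set has size strictly less than $|S|$. Since $S_1 \subseteq N(i)$, the ratio telescopes to at most $x_i \prod_{j \in N(i)\setminus S_1}(1-x_j) \leq x_i$, which closes the induction.

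Applying the chain rule one final time gives
$$\Pr\left[\bigcap_{i=1}^n \overline{A_i}\right] = \prod_{i=1}^n \Pr\left[\overline{A_i} \,\middle|\, \bigcap_{j<i} \overline{A_j}\right] \geq \prod_{i=1}^n (1-x_i) > 0,$$
completing the proof of the asymmetric form and hence of the lemma. The only genuine obstacle is the bookkeeping in the inductive step: one must split $S$ precisely along the dependency neighbourhood $N(i)$ so that mutual independence can be exercised on the numerator, and verify that every conditional probability appearing in the chain-rule expansion of the denominator really does lie at a strictly smaller instance than the one currently being proved.
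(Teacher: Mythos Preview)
Your argument is correct and is exactly the standard proof of the symmetric Lov\'asz Local Lemma via the asymmetric version, as found for instance in Alon--Spencer. The paper itself does not supply a proof of this lemma at all: it simply quotes the statement as Corollary~5.1.2 of \cite{AS} and uses it as a black box in the proof of Theorem~\ref{thm:main4}. So there is nothing to compare against---you have furnished a full (and standard) proof where the paper only gives a citation.
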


\section{Proof of Theorems \ref{thm:main1} and \ref{thm:kfold}}
\begin{proof}[Proof of Theorem \ref{thm:main1}]
Write $D=A-A$ and apply Lemma \ref{KSPlun} in the multiplicative setting with $k=2$, $X=DD$ and $B_1=B_2=D$. We obtain a subset $X' \subseteq DD$ such that $|X'| \gg |DD|$ and
\begin{equation}
|X'DD| \ll \frac{|DDD|^2}{|DD|}.
\label{eq1}
\end{equation}
Then apply Lemma \ref{triangle}, again in the multiplicative setting, with $A=B=DD$ and $C=(X')^{-1}$. This bounds the left hand side of \eqref{eq1} from below, giving
\begin{equation}
|DD/DD|^{1/2}|X'|^{1/2} \leq |X'DD| \ll \frac{|DDD|^2}{|DD|}.
\label{eq2}
\end{equation}
Recall the observation of Shkredov \cite{S} that $R[A]-1=-R[A]$. Indeed, for any $a,b,c \in A$
$$\frac{a-b}{a-c}-1=\frac{a-b-(a-c)}{a-c}=-\frac{c-b}{c-a}.$$
Therefore, by Lemmas \ref{GS} and \ref{3var},
$$|DD/DD|\geq |R[A]\cdot R[A]|=|R[A]\cdot (R[A]-1)| \gg |R[A]|^{5/4} \gg \frac{|A|^{5/2}}{\log^{5/4} |A|}.$$
Putting this bound into \eqref{eq2} yields
\begin{equation}
\frac{|A|^{5/4}}{\log^{5/8}|A|}|X'|^{1/2} \ll \frac{|DDD|^2}{|DD|}.
\label{eq3}
\end{equation}
Finally, since $|X'| \gg |DD|\gg \frac{|A|^2}{\log |A|}$ by \eqref{Mink}, it follows that
\begin{equation}
|DDD|^2 \gg \frac{|A|^{5/4}}{\log^{5/8}|A|}|DD|^{3/2} \gg \frac{|A|^{5/4}}{\log^{5/8}|A|}\left (\frac{|A|^2}{\log|A|}\right)^{3/2}=\frac{|A|^{17/4}}{\log^{17/8}|A|}.
\label{eq4}
\end{equation}
and thus
$$|DDD| \gg \frac{|A|^{2+\frac{1}{8}}}{\log^{17/16}|A|}$$
as claimed.
\end{proof}

We now turn to the proof of Theorem \ref{thm:kfold}, which exploits similar ideas to the proof of Theorem \ref{thm:main1}.  

\begin{proof}[Proof of Theorem \ref{thm:kfold}]
Let  $R := R[A]$ and $D = A-A$. Further, define  $X_0 = D/D$ and recursively $X_i$ to be either $X_{i-1}R$ or $X_{i-1}(R-1)$ such that
$$
|X_i| = \max \{ |X_{i-1}R|,  |X_{i-1}(R-1)|\}.
$$
We are going to prove by induction on $k$ that
$$
	|X_k| \gg_k \frac{|A|^{3-\frac{1}{2^k}}}{\log^{\frac{3}{2}}|A|}.
$$
Indeed, the base case $k=0$ follows from \eqref{ungar}. Now, let $k \geq 1$. Then applying inequality \eqref{GS2} in Lemma \ref{GS}, Lemma \ref{3var} and the inductive hypothesis
$$
	|X_{k+1}| \gg |X_k|^{1/2}|R|^{3/4} \gg_k \left(\frac{|A|^{3-\frac{1}{2^k}}}{\log^{\frac{3}{2}}|A|}\right)^{1/2} \left(\frac{|A|^2}{\log|A|} \right )^{3/4} =\frac{|A|^{3-\frac{1}{2^{k+1}}}}{\log^{\frac{3}{2}}|A|}.
$$

Now fix $\epsilon > 0$ and choose $k$ sufficiently large so that $\frac{1}{2^k} < \epsilon$.
It was already noted earlier, $R \subseteq D/D$ and $R-1 \subseteq -D/D$, and so 
$$
	|A|^{3-\epsilon} \leq \frac{|A|^{3-\frac{1}{2^k}}}{\log^{\frac{3}{2}}|A|}  \ll_k |X_k| \leq \left| \frac{D^{(k+1)}}{D^{(k+1)}} \right|.
$$
Applying Lemma \ref{triangle} multiplicatively with $A = B = D^{(k+1)}$ and $C=1/D^{(k+1)}$ we obtain that
$$
	|D^{(k+1)}||A|^{3-\epsilon} \ll_\epsilon 	|D^{(2k+2)}|^2,
$$
so $|D^{(2k+2)}| \gg_\epsilon |A|^{3-\epsilon}$. Since $k$ depends on $\epsilon$ only, it completes the proof.
\end{proof}

\subsection{Remarks, improvements and conjectures}

An improvement to Lemma \ref{GS} was given in \cite{JORN}, in the form of the bound
$$|A(A+\alpha)|\gg \frac{|A|^{24/19}}{\log^{2/19} |A|}.$$
Inserting this into the previous argument, we obtain the following small improvement:
$$|DDD| \gg \frac{|A|^{2+\frac{5}{38}}}{\log^{\frac{83}{76}}|A|}.$$
Furthermore, a small modification of the previous arguments can also give the bound
$$|DD/D| \gg \frac{|A|^{2+\frac{5}{38}}}{\log^{\frac{83}{76}}|A|}.$$

In the spirit of Theorem \ref{thm:kfold}, it is reasonable to conjecture the following.
\begin{conjecture} \label{conj:iterateddifference}
For any $l > 0$ there exists $k > 0$ such that 
$$|(A-A)^{(k)}| \gg_{k, l} |A|^{l}$$
uniformly for all sets $A \subset \mathbb{R}$.
\end{conjecture}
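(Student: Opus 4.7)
The plan is to reduce Conjecture \ref{conj:iterateddifference} to a Bourgain--Chang type statement for the reals via a translation trick. By replacing $A$ with $A-a$ for any $a \in A$, one may assume $0 \in A$; the difference set $A-A$ is unchanged, while now $A \subseteq A-A$ and consequently $A^{(k)} \subseteq (A-A)^{(k)}$ for every $k$. Thus, if a Bourgain--Chang type inequality holds for $\mathbb R$ --- that is, for every $l$ there exists $k$ with $\max(|kA|, |A^{(k)}|) \gg_{l,k} |A|^l$ for all finite $A \subset \mathbb R$ --- then in the product-growth regime we are immediately done.

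The sum-growth regime requires more work, since $|kA| \gg |A|^l$ does not directly control the multiplicative behaviour of $A-A$. Here I would attack the contrapositive: if $|(A-A)^{(k)}| \leq |A|^L$ for all $k$, then iterated Plünnecke in the multiplicative group forces $A-A$ to lie in a generalised geometric progression of bounded rank (Freiman's theorem applied to $\mathbb R^{\times}$). Combining this multiplicative GGP structure with the additive symmetry $A-A = -(A-A)$ and $|A-A| \geq |A|$ should push $A$ itself extremely close to a short geometric progression, at which point a direct computation succeeds. In the prototypical case $A = \{1, q, q^2, \ldots, q^n\}$, the non-zero differences factor as $\pm q^m (q^d - 1)$, so $(A-A)^{(k)}$ contains every expression $\pm q^{m_1 + \cdots + m_k}\prod_{i=1}^k (q^{d_i} - 1)$; by $S$-unit / Subspace Theorem bounds these are essentially distinct, which already yields $|(A-A)^{(k)}| \gg |A|^{k+1}/k^{O(k)}$ and hence beats $|A|^l$ once $k>l$.

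The main obstacle is the structured case: making the Freiman-to-$S$-unit step uniform in the rank of the ambient GGP, with constants independent of the Galois theory of $\mathbb Q(q)$ and of how close $A$ is to the GGP. A secondary obstacle is that the bootstrap in Theorem \ref{thm:kfold} saturates at $|A|^{3-\epsilon}$ because inequality \eqref{GS2} carries exponent $1/2$ on the input set: once $|X_k|$ reaches $|R|^{3/2} \approx |A|^3$, no further growth is extracted through that mechanism. Pushing past the cubic barrier via the iterative mechanism of Theorem \ref{thm:kfold} alone therefore seems impossible, and breaking it --- let alone reaching $|A|^l$ for every $l$ --- appears to require either a genuinely new product-set estimate with a larger exponent on the input set, or substantial number-theoretic input of the Subspace Theorem kind that the current sum-product toolkit does not contain.
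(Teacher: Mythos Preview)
The statement is a \emph{conjecture}, not a theorem: the paper does not prove it and explicitly remarks that even the case $l=3$ ``is seemingly beyond the limit of the methods of the present paper.'' Your proposal is likewise not a proof, and your final paragraph correctly acknowledges this. So there is no ``paper's own proof'' to compare against; what can be compared is your heuristic strategy versus the paper's discussion surrounding the conjecture.

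Your translation trick (assume $0\in A$ so that $A\subseteq A-A$ and hence $A^{(k)}\subseteq (A-A)^{(k)}$) is valid, but the reduction to a Bourgain--Chang inequality over $\mathbb R$ trades one open problem for another of the same calibre: the iterated Erd\H{o}s--Szemer\'edi statement $\max(|kA|,|A^{(k)}|)\gg_{k,l}|A|^l$ is known over $\mathbb Z$ but not over $\mathbb R$. More seriously, the contrapositive step is loose. The hypothesis $|(A-A)^{(k)}|\le |A|^L$ for all $k$ gives, for $D=A-A$, only $|DD|\le |D|^L$, i.e.\ multiplicative doubling constant $|D|^{L-1}$ growing with $|D|$. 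Freiman's theorem then places $D$ in a generalised geometric progression whose rank grows with $|D|$, and the Subspace Theorem constants explode with that rank; so ``bounded rank'' is not available and the $S$-unit argument does not go through uniformly.

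This is exactly the distinction the paper draws. Rather than your route, the paper reformulates the conjecture as Conjecture~\ref{conj:iterateddifference2} (if $|XX|\le |X|^{1+\delta}$ and $A-A\subseteq X$ then $|A|\ll_\delta |X|^{\epsilon}$), proves the two formulations equivalent by an iterated-doubling argument, and then observes that the Freiman-plus-Subspace-Theorem approach you sketch succeeds only under the far stronger hypothesis $|XX|\le K|X|$ with $K$ a fixed absolute constant. Thus your structured-case analysis recovers precisely the partial result the paper already flags, and the obstacle you name---uniformity in the rank of the ambient progression---is the same gap the paper identifies between that partial result and the full conjecture.
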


 Even the case $l = 3$ is of interest as it is seemingly beyond the limit of the methods of the present paper. An alternative form of Conjecture \ref{conj:iterateddifference} is as follows.
\begin{conjecture} \label{conj:iterateddifference2}
For any $\epsilon > 0$ there exists $\delta > 0$ such that for any real set $X$ with
$$|XX| \leq |X|^{1+\delta}$$ 
the following holds: if $A \subset \mathbb{R}$ is such that 
$$
A - A \subset X,
$$
then
$$
	|A| \ll_\delta |X|^{\epsilon}.
$$
\end{conjecture}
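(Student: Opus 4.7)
The plan is to derive Conjecture \ref{conj:iterateddifference2} as a consequence of Conjecture \ref{conj:iterateddifference} via a multiplicative Pl\"unnecke--Ruzsa argument. The reason the authors flag these statements as alternative forms of the same principle is exactly that such a reduction is available.

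First I would apply the multiplicative version of Pl\"unnecke--Ruzsa to $X \setminus \{0\}$ in the group $(\mathbb{R} \setminus \{0\}, \cdot)$ (the element $0$, if present, can be stripped with negligible loss, since it contributes at most the singleton $\{0\}$ to every iterated product set). The hypothesis $|XX| \leq |X|^{1+\delta}$ is a multiplicative doubling of $K := |X|^{\delta}$, and standard iteration of Lemma \ref{Plun} yields
$$|X^{(k)}| \leq K^{k}\,|X| = |X|^{1+k\delta}$$
for every integer $k \geq 1$. Since $A-A \subset X$ gives $(A-A)^{(k)} \subset X^{(k)}$, we conclude
$$|(A-A)^{(k)}| \leq |X|^{1+k\delta}.$$
Now, given $\epsilon > 0$, I would set $l := 2/\epsilon$ and invoke Conjecture \ref{conj:iterateddifference} to produce an integer $k = k(l)$ and a constant $c_{k,l} > 0$ for which $|(A-A)^{(k)}| \geq c_{k,l}\,|A|^{l}$ holds uniformly in $A$. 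Comparing the two bounds yields
$$|A| \leq c_{k,l}^{-1/l}\,|X|^{(1+k\delta)/l}.$$
Since $k$ depends only on $l = 2/\epsilon$, and not on $\delta$, one can then pick $\delta$ small enough (any $\delta \leq 1/k$ works) to force $(1+k\delta)/l \leq \epsilon$, which gives $|A| \ll_{\delta} |X|^{\epsilon}$ as required.

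The main obstacle is that this strategy merely trades Conjecture \ref{conj:iterateddifference2} for Conjecture \ref{conj:iterateddifference}, which is itself wide open: the authors explicitly note that even the case $l = 3$ lies beyond the methods of the present paper. A genuinely standalone proof of Conjecture \ref{conj:iterateddifference2} would need to exploit the rigid multiplicative structure of sets $X$ with doubling $|X|^{\delta}$ --- by the multiplicative Freiman theorem, such $X$ are essentially geometric progressions of bounded dimension --- and rule out that the difference set of a large real set fits inside such a low-dimensional structure. This is essentially a sum--product statement in disguise, and any direct attack would presumably have to combine Szemer\'edi--Trotter style incidence bounds with the concrete multiplicative structural information available on $X$.
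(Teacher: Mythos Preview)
Your reduction is exactly the one the paper gives: apply the multiplicative Pl\"unnecke--Ruzsa inequality to bound $|X^{(k)}| \leq |X|^{1+k\delta}$, feed in $(A-A)^{(k)} \subset X^{(k)}$, and compare with the lower bound $|(A-A)^{(k)}| \gg_{k,l} |A|^l$ supplied by Conjecture~\ref{conj:iterateddifference}; the paper takes $l = \lfloor 1/\epsilon \rfloor + 3$ and $\delta = \epsilon/(10k)$ where you take $l = 2/\epsilon$ and $\delta \leq 1/k$, but these are cosmetic differences. You are also right that this is only a conditional derivation and that Conjecture~\ref{conj:iterateddifference} is open --- the paper makes the same point, and in fact goes on to prove the reverse implication as well, so the two conjectures are shown to be equivalent rather than one merely implying the other.
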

For comparison with Conjecture \ref{conj:iterateddifference}, we note that a similar sum-product estimate with many variables was proven in \cite{BORN}, in the form of the inequality
$$|4^{k-1}A^{(k)}| \gg |A|^k.$$
We also note that Corollary 4 in \cite{SZH} verifies Conjecture \ref{conj:iterateddifference2} for any $\epsilon > 1/2 - c$,  where $c > 0$ is some unspecified (but effectively computable) absolute constant.

It is not hard to see that Conjecture \ref{conj:iterateddifference2} is indeed equivalent to Conjecture \ref{conj:iterateddifference}. Assume that Conjecture \ref{conj:iterateddifference} is true and fix $\epsilon > 0$. Next, take $l = \lfloor 1/\epsilon \rfloor + 3$. Assuming that  Conjecture \ref{conj:iterateddifference} holds, there is $k(\epsilon)$ such that
\begin{equation} \label{eq:differencegrowth}
|(A-A)^{(k)}| \gg_{k, l} |A|^{l} 
\end{equation}
holds for real sets $A$. 

Now, in order to deduce Conjecture \ref{conj:iterateddifference2}, take $\delta = \epsilon/10k$ and assume that there are sets $X, A$ such that $|XX| \leq |X|^{1+\delta}$ and $A-A \subset X$ .
If we now also assume for contradiction that $|A| \geq |X|^{\epsilon}$, then by the Pl\"unnecke-Ruzsa inequality (\ref{Plun})
$$
|(A-A)^{(k)}| \leq |X^{(k)}| \leq |X|^{1+\delta k} \leq |A|^{\frac{1 + \delta k}{\epsilon}} \leq |A|^{l-1},
$$
which contradicts (\ref{eq:differencegrowth}) if $|A|$ is large enough (depending on $\epsilon$), which we can safely assume.

Now let us assume that Conjecture \ref{conj:iterateddifference2} holds true. Let $l > 0$ be fixed and $\epsilon = \frac{1}{l+1}$. Let $A$ be an arbitrary real set. Consider the set
$X_0 = (A-A)$ and define recursively 
$$
	X_{i+1} = X_iX_i.
$$
Note that by construction
$$
    X_i = (A-A)^{(2^{i})}.
$$
Let $c$ be an arbitrary non-zero element in $A-A$. Observe that
$$
   c^{2^i-1} \cdot A - c^{2^i-1} \cdot A = c^{2^i-1} \cdot (A-A) \subset (A-A)^{(2^{i})} = X_i,
$$
and so
$A_i - A_i \subset X_{i} $  where $A_i := c^{2^i-1} \cdot A$. Thus, we are in position to apply the assumption that  Conjecture \ref{conj:iterateddifference2} holds true. In particular, there is $\delta(\epsilon) > 0$ such that  $|A| \ll_\delta |X|^\epsilon$ whenever $A-A \subset X$ and $|XX| \leq |X|^{1+\delta}$. 

Now consider $X_i$ for $i = 1, \ldots, \lfloor l/\delta \rfloor + 1 := j$. For each $i$, if $|X_{i+1}| \leq |X_i|^{1+\delta}$ it follows from Conjecture \ref{conj:iterateddifference2} that $|A| = |A_{i}| \ll_\delta |X_i|^{\epsilon}$, so 
$$
|(A-A)^{(2^i)}| = |X_i| \gg_\delta |A|^{1/\epsilon} \geq |A|^l
$$ 
and we are done. Otherwise, if for each $1 \leq i \leq j$ holds $|X_{i+1}| \geq |X_i|^{1+\delta}$, one has
$$
|(A-A)^{(2^j)}| = |X_j| \geq |X_0|^{1+ j\delta} \geq |A|^l.
$$
Thus, Conjecture \ref{conj:iterateddifference} holds uniformly in $A$ with 
$$
	k(l) := 2^j =2^{ \lfloor l/\delta(l) \rfloor + 1 }.
$$

For a further support, let us remark that Conjecture \ref{conj:iterateddifference2} holds true if one replaces the condition $|XX| \leq |X|^{1+\delta}$ with the more restrictive one $|XX| \leq K|X|$ where $K >0$ is an arbitrary but fixed absolute constant. In this setting Conjecture \ref{conj:iterateddifference2} can be proved by combining the Freiman Theorem and the Subspace Theorem and then applying almost verbatim the arguments of \cite{ORNZ}. We leave the details to the interested reader.


\section{Proofs of Theorems \ref{thm:main2} and \ref{thm:main3}} 

\subsection{Proof of Theorem \ref{thm:main2}}

We will first prove the following lemma.

\begin{lemma} \label{prelim} Let $A \subset \mathbb R$. Then
$$ \left | \frac{A+A}{A+A} + \frac{A}{A} \right | \gg  \frac{|A|^{54/32}|A/A|^{13/32}}{\log^{3/4}|A|}.$$
\end{lemma}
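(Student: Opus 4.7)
Set $Q = \frac{A+A}{A+A}$, so the goal becomes $|Q + A/A| \gg |A|^{27/16}|A/A|^{13/32}/\log^{3/4}|A|$. A preliminary observation is that $A/A \subseteq Q$, since $(2A)/(2A) = A/A$ and $2A \subseteq A+A$; in particular $Q + A/A \subseteq Q+Q$, so the problem really concerns the additive doubling of $Q$ along $A/A$-translates.

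Lund's bound (Lemma \ref{lund2} with $B=A$) gives $|Q| \gg |A|^{9/4}/(|A/A|^{1/8}\log|A|)$. Taking the geometric mean of the trivial bounds $|Q+A/A|\ge|Q|$ and $|Q+A/A|\ge|A/A|$ yields only
$$|Q+A/A|\gg \frac{|A|^{9/8}|A/A|^{7/16}}{\log^{1/2}|A|},$$
which falls short of the target by a factor of roughly $|A|^{9/16}$ in the balanced regime $|A/A| \sim |A|^{18/17}$. Closing this gap is the essence of the lemma.

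My approach is to apply the Cauchy--Schwarz inequality
$$|Q+A/A|\,\geq\,\frac{|Q|^2\,|A/A|^2}{E^+(Q,A/A)}, \quad E^+(Q,A/A) = |\{(q_1,q_2,r_1,r_2)\in Q^2\times(A/A)^2:\,q_1+r_1=q_2+r_2\}|,$$
and to bound the cross additive energy $E^+(Q,A/A)$ by reducing to Lemma \ref{lund2}. Writing $q_i = (s_i+t_i)/(u_i+v_i)$ and $r_i = a_i/b_i$ with all variables in $A$ and clearing denominators in $q_1-q_2=r_2-r_1$, the defining equation becomes a polynomial identity in twelve $A$-variables. A dyadic pigeonhole on representation multiplicities (which contributes the $\log^{3/4}|A|$ factor in the statement) isolates a sub-configuration to which Lemma \ref{lund2} applies after pairing $A$ with a suitable auxiliary set extracted from the pigeonhole. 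Substituting the resulting upper bound on $E^+(Q, A/A)$ together with Lund's lower bound on $|Q|$ into the Cauchy--Schwarz estimate and optimising should produce the target exponents.

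\textbf{Main obstacle.} The delicate step is converting the polynomial equation from the energy count into a form where Lemma \ref{lund2} can be applied as a black box. The equation genuinely mixes additive and multiplicative structure, so the pigeonhole must be set up carefully to expose a pure $(X+X)/(Y+Y)$ sub-problem. Aligning the exponents so that precisely $|A|^{27/16}|A/A|^{13/32}$ emerges after substitution, with the pigeonholing losses contained in the $\log^{3/4}|A|$ factor, is the other technical challenge.
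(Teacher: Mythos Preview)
Your proposal is not a proof: the central step --- bounding $E^+(Q,A/A)$ --- is left as a plan (``should produce the target exponents''), and you yourself flag the conversion of the twelve-variable polynomial identity into a clean $(X+X)/(Y+Y)$ problem as an unresolved obstacle. Nothing in what you wrote actually establishes the required energy bound, so as it stands the argument does not close.

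The paper's proof avoids energy entirely and is a two-line application of existing tools. The key observation you are missing is that $Q=(A+A)/(A+A)$ is closed under taking reciprocals, i.e.\ $Q=1/Q$. Apply the Elekes--Nathanson--Ruzsa convexity lemma (Lemma~\ref{ENR}) with $f(x)=1/x$, $X=Q$ and $Y=Z=A/A$; since $f(X)=X$ this gives
\[
|Q+A/A|^2 \gg |Q|^{3/2}|A/A|,\qquad\text{so}\qquad |Q+A/A|\gg |Q|^{3/4}|A/A|^{1/2}.
\]
Now insert Lund's bound $|Q|\gg |A|^{9/4}|A/A|^{-1/8}\log^{-1}|A|$ (Lemma~\ref{lund}) to obtain
\[
|Q+A/A|\gg \frac{|A|^{27/16}}{|A/A|^{3/32}\log^{3/4}|A|}\cdot|A/A|^{1/2}
=\frac{|A|^{54/32}|A/A|^{13/32}}{\log^{3/4}|A|}.
\]
The $\log^{3/4}|A|$ in the statement is not a pigeonhole loss at all; it is simply the $3/4$ power of the logarithm already present in Lund's estimate. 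No Cauchy--Schwarz, no energy, no twelve-variable equations are needed.
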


\begin{proof}
Apply Lemma \ref{ENR} with $f(x)=1/x$, $X=(A+A)/(A+A)$ and $Y=Z=A/A$. Note that $f(X)=X$ and so
$$ \left | \frac{A+A}{A+A} + \frac{A}{A} \right | \gg \left | \frac{A+A}{A+A} \right | ^{3/4} |A/A|^{1/2}.$$
Then applying Lemma \ref{lund}, it follows that
$$ \left | \frac{A+A}{A+A} + \frac{A}{A} \right | \gg \frac{|A|^{3/2}}{\log^{3/4}|A|}\left( \frac{|A|^2}{|A/A|} \right)^{\frac{3}{32}} |A/A|^{1/2}=\frac{|A|^{54/32}|A/A|^{13/32}}{\log^{3/4}|A|}. $$
\end{proof}

This immediately implies that 
$$ \left | \frac{A+A}{A+A} + \frac{A}{A} \right | \gg |A|^{2+\frac{3}{32}-\epsilon}.$$
However, by optimising between Lemma \ref{prelim} and Lemma \ref{lund} we can get a slight improvement in the form of Theorem \ref{thm:main2}.

\begin{proof}[Proof of Theorem \ref{thm:main2}]
Let $|A/A|=K|A|$. If $K \geq \frac{|A|^{\frac{1}{17}}}{\log^{\frac{8}{17}}|A|}$ then Lemma \ref{prelim} implies that
$$ \left | \frac{A+A}{A+A} + \frac{A}{A} \right | \gg  \frac{|A|^{67/32}K^{13/32}}{\log^{3/4}|A|} \gg \frac{|A|^{2+2/17}}{\log^{16/17}|A|}.$$
On the other hand, if $K \leq \frac{|A|^{\frac{1}{17}}}{\log^{\frac{8}{17}}|A|}$ then Lemma \ref{lund} implies that
$$ \left | \frac{A+A}{A+A} + \frac{A}{A} \right | \geq  \left | \frac{A+A}{A+A} \right | \gg \frac{|A|^2}{\log |A|}\left ( \frac{|A|}{K} \right )^{1/8} \gg \frac{|A|^{2+2/17}}{\log^{16/17}|A|}.$$
\end{proof}

\subsection{Proof of Theorem \ref{thm:main3}}

Apply Lemma \ref{lund2} with $B=AA$. This yields
$$\left |\frac{AA+AA}{A+A}\right | \gg \frac{|A||AA|}{\log |A|} \left( \frac{|A||AA|}{|A/AA|} \right)^{1/8}.$$
By applying Lemma \ref{Plun} in the multiplicative setting, we have
$$|AA/A| \leq  \frac {|AA|^3}{|A|^2}$$
and so
$$\left |\frac{AA+AA}{A+A}\right | \gg \frac{|A||AA|}{\log |A|} \left( \frac{|A||AA|}{|A/AA|} \right)^{1/8} \geq \frac{|A||AA|}{\log |A|} \left( \frac{|A|^3}{|AA|^2} \right)^{1/8}=\frac{|A|^{11/8}|AA|^{3/4}}{\log |A|}$$
as required.

\section{Proof of Theorem \ref{thm:main4}}

Consider the point set $A\times A$ in the plane. Without loss of generality, we may assume that $A$ consists of strictly positive reals, and so this point set lies exclusively in the positive quadrant. We also assume that $|A| \geq C$ for some sufficiently large absolute constant $C$. For smaller sets, the theorem holds by adjusting the implied multiplicative constant accordingly.

For $\la \in A/A$, let $\mathcal A_{\la}$ denote the set of points from $A \times A$ on the line through the origin with slope $\la$ and let $A_{\la}$ denote the projection of this set onto the horizontal axis. That is,
$$\mathcal A_{\la}:=\{(x,y) \in A \times A:y=\la x\},\,\,\,\,\,\,\,\,A_{\la}:=\{x:(x,y) \in \mathcal A_{\la}\}.$$
Note that $|\mathcal A_{\la}|=|A_{\la}|$ and
$$\sum_{\la} |A_{\la}|=|A|^2.$$

We begin by dyadically decomposing this sum and applying the pigeonhole principle in order to find a large subset of $A\times A$ consisting of points which lie on lines of similar richness. Note that
$$\sum_{\la:|A_{\la}| \leq \frac{|A|^2}{2|A/A|}} |A_{\la}| \leq \frac{|A|^2}{2},$$ 
and so
$$\sum_{\la:|A_{\la}| \geq \frac{|A|^2}{2|A/A|}} |A_{\la}| \geq \frac{|A|^2}{2}.$$
Dyadically decompose the sum to get
$$\sum_{j \geq 1}^{\lceil \log |A| \rceil} \sum_{\la: 2^{j-1}\frac{|A|^2}{2|A/A|} \leq |A_{\la}| < 2^j\frac{|A|^2}{2|A/A|}}|A_{\la}| \geq \frac{|A|^2}{2}.$$
Therefore, there exists some $\tau \geq \frac{|A|^2}{2|A/A|}$ such that
\begin{equation}
\tau|S_{\tau}| \gg \sum_{\la \in S_{\tau}} |A_{\la}| \gg \frac{|A|^2}{\log|A|},
\label{Sbound}
\end{equation}
where $S_{\tau}:=\{ \la : \tau \leq |A_{\la}| <2\tau \}$. Using the trivial bound $\tau \leq |A|$, it also follows that
\begin{equation}
|S_{\tau}| \gg \frac{|A|}{\log|A|}.
\label{Sbound2}
\end{equation}

For a point $p=(x,y)$ in the plane with $x \neq 0$, let $r(p):=y/x$ denote the slope of the line through the origin and $p$. For a point set $P \subseteq \mathbb R^2$ let $r(P):=\{r(p) : p \in P \}$. The aim is to prove that 
\begin{equation}
|r((AA+A) \times (AA+A))|=|r((A \times A ) + (AA \times AA))|\gg \frac{|A|^{2+\frac{1}{8}}}{\log |A|}.
\label{aim}
\end{equation}
Since $r((AA+A) \times (AA+A))= \frac{AA+A}{AA+A}$, inequality \eqref{aim} implies the theorem.

Write $S_{\tau}=\{ \la_1, \la_2,\dots,\la_{|S_{\tau}|} \}$ with $\la_1< \la_2 < \dots < \la_{|S_{\tau}|}$ and similarly write $A=\{x_1,\dots,x_{|A|} \}$ with $x_1< x_2 < \dots < x_{|A|}$ . For each slope $\la_i$, arbitrarily fix an element $\alpha_i \in A_{\la_i}$. Note that, for any $1 \leq i \leq |S_{\tau}|-1$,
\begin{align*}\lambda_i< r((\alpha_i, \la_i \alpha_i)+(\alpha_{i+1}x_1, \la_{i+1} \alpha_{i+1}x_1))&< r((\alpha_i, \la_i \alpha_i)+(\alpha_{i+1}x_2, \la_{i+1} \alpha_{i+1}x_2))
\\& < \dots 
\\& <r((\alpha_i, \la_i \alpha_i)+(\alpha_{i+1}x_{|A|}, \la_{i+1} \alpha_{i+1}x_{|A|}))<\la_{i+1}.
\end{align*}
Since $\la_i \alpha_i$ and $\la_{i+1} \alpha_{i+1}$ are elements of $A$, this gives $|A|$ distinct elements of $R((AA+A) \times (AA+A))$ in the interval $(\la_i,\la_{i+1})$. Summing over all $i$, it follows that
\begin{equation}
 |r((AA+A) \times (AA+A))| \geq \sum_{i=1}^{|S_{\tau}|-1} |A|   = |A|(|S_{\tau}|-1) \gg |A||S_{\tau}|.
\label{basic}
\end{equation}
If $|S_{\tau}| \geq \frac{c|A|^{9/8}}{\log|A|}$ for any absolute constant $c>0$ then we are done. Therefore, we may assume for the remainder of the proof that this is not the case. In particular, by \eqref{Sbound}, we may assume that
\begin{equation}
\tau \geq C |A|^{7/8}
\label{taubound}
\end{equation}
holds for any absolute constant $C$.

Next, the basic lower bound \eqref{basic} will be enhanced by looking at larger clusters of lines, a technique introduced by Konyagin and Shkredov \cite{KS} and utilised again by Lund \cite{L}. We will largely adopt the notation from \cite{L}.

Let $2 \leq M \leq \frac{|S_{\tau}|}{2}$ be an integer parameter, to be determined later. We partition $S_{\tau}$ into clusters of size $2M$, with each cluster split into two subclusters of size $M$, as follows. For each $1\leq t \leq \left \lfloor \frac{|S_{\tau}|}{2M} \right \rfloor$, let 
\begin{align*}
f_t&=2M(t-1)
\\ T_t & = \{ \la_{f_t+1}, \la_{f_t+2}, \dots , \la_{f_t+M} \}
\\ U_t & = \{ \la_{f_t+M+1}, \la_{f_tM++2}, \dots , \la_{f_t+2M} \}.
\end{align*}
For the remainder of the proof we consider the first cluster with $t=1$, but the same arguments work for any $1\leq t \leq \left \lfloor \frac{|S_{\tau}|}{2M} \right \rfloor $. We simplify the notation by writing $T_1=T$ and $U_1=U$.

Let $1 \leq i,k \leq M$ and $M+1 \leq j,l \leq 2M$ with at least one of $i \neq k$ or $j \neq l$ holding. For $a_i \in A_{\la_i}$ and $a_k \in A_{\la_k}$. Define
$$\mathcal E (a_i,j,a_k,l)=| \{ (x,y) \in A \times A : r((a_i,\la_i a_i)+(\alpha_j x,\la_j \alpha_j x))=r((a_k,\la_k a_k)+(\alpha_l y,\la_l \alpha_l y))   |.$$


\begin{lemma} \label{inc} Let $i,j,k,l$ satisfy the above conditions and let $K \geq 2$. Then there are $O(|A|^4/K^3+|A|^2/K)$ pairs $(a_i,a_k) \in  A_{\la_i} \times  A_{\la_k}$ such that
$$\mathcal E (a_i,j,a_k,l) \geq K.$$
\end{lemma}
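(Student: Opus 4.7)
The plan is to convert the condition defining $\mathcal E(a_i, j, a_k, l)$ into a point--curve incidence problem in the $(x,y)$-plane, with $(a_i, a_k)$ parametrising the curves, and then apply Lemma \ref{thm:PS}. Cross-multiplying
$$\frac{\la_i a_i + \la_j \alpha_j x}{a_i + \alpha_j x} = \frac{\la_k a_k + \la_l \alpha_l y}{a_k + \alpha_l y}$$
and expanding, the $x^2$ and $y^2$ terms do not appear, leaving a conic of the form
$$\alpha_j \alpha_l (\la_j - \la_l)\,xy \;+\; \alpha_j (\la_j - \la_k) a_k\,x \;+\; \alpha_l (\la_i - \la_l) a_i\,y \;+\; (\la_i - \la_k) a_i a_k \;=\; 0,$$
which I denote $C_{a_i, a_k}$. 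The orderings $k \leq M < j$ and $i \leq M < l$ guarantee $\la_j - \la_k \neq 0$ and $\la_i - \la_l \neq 0$, so $C_{a_i, a_k}$ is the graph $y = \varphi(x)$ of a M\"obius transformation (or an affine line in the degenerate case $\la_j = \la_l$).

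Next, I would set $\mathcal P := A \times A$ and $\mathcal L := \{C_{a_i, a_k} : (a_i, a_k) \in A_{\la_i} \times A_{\la_k}\}$, and verify the two hypotheses of Lemma \ref{thm:PS}. The first, that two distinct members of $\mathcal L$ intersect in at most two points, is immediate from the M\"obius graph description, since equating two such graphs produces a rational function whose numerator has degree at most two. For the second, substituting two distinct points of $\mathcal P$ into the equation and taking the difference eliminates the bilinear term $a_i a_k$ and yields a single linear relation between $a_i$ and $a_k$; reinserting this into either original equation produces a quadratic in one of the variables, so at most two pairs $(a_i, a_k)$ correspond to curves through both points. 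The main obstacle here is verifying that this quadratic does not degenerate into an identity, which is equivalent to showing that $C_{a_i,a_k}$ does not split into two lines; this reduces to the inequality $(\la_j - \la_i)(\la_l - \la_k) \neq 0$, which holds again by the strict ordering of the slopes.

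Finally, the assignment $(a_i, a_k) \mapsto C_{a_i, a_k}$ is injective, since the coefficient of $x$ in the defining equation recovers $a_k$ (via the non-zero factor $\alpha_j(\la_j - \la_k)$) and the coefficient of $y$ recovers $a_i$. Consequently, the pairs with $\mathcal E(a_i, j, a_k, l) \geq K$ are in bijection with the members of $\mathcal L_K$, and Lemma \ref{thm:PS} applied with $|\mathcal P| = |A|^2$ yields the desired bound $|\mathcal L_K| \ll |A|^4/K^3 + |A|^2/K$.
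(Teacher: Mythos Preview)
Your proposal is correct and follows exactly the approach of the paper: you define the same family of curves $C_{a_i,a_k}$ (the paper writes them as $l_{a,b}$ before cross-multiplying), identify $\mathcal E(a_i,j,a_k,l)$ with the number of incidences of $C_{a_i,a_k}$ with $\mathcal P=A\times A$, and apply Lemma~\ref{thm:PS}. The only difference is that the paper delegates the verification of the two Pach--Sharir hypotheses to Lemma~2 of \cite{L}, whereas you carry it out explicitly; your computation of the non-degeneracy condition $BC-AD=\alpha_j\alpha_l a_i a_k(\la_j-\la_i)(\la_k-\la_l)\neq 0$ and the injectivity argument are correct and match what that verification amounts to.
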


\begin{proof} We essentially copy the proof of Lemma 2 in \cite{L}, and so some details are omitted. Let $l_{a,b}$ be the curve with equation
$$(\la_i a +\la_j \alpha_j x)(b+ \alpha_l y) = (\la_k b +\la_l \alpha_l y)(a+ \alpha_j x).$$
Let $\mathcal L$ be the set of curves
$$ \mathcal L=\{l_{a,b} : a \in A_{\la_i}, b \in A_{\la_k} \}$$
and let $\mathcal P=A \times A$. Note that $(x,y) \in l_{a_i,a_k}$ if and only if
$$ r((a_i,\la_i a_i)+(\alpha_j x,\la_j \alpha_j x))=r((a_k,\la_k a_k)+(\alpha_l y,\la_l \alpha_l y)). $$
Hence $\mathcal E (a_i,j,a_k,l) \geq K$ if and only if $|l_{a_i,a_k} \cap \mathcal P| \geq K$.

We can verify that the set of curves $\mathcal L$ satisfies the conditions of Lemma \ref{thm:PS}.  One can copy this verbatim from the corresponding part of of the proof of Lemma 2 in \cite{L}. Therefore, there are most
$$O\left ( \frac{|\mathcal P|^2}{K^3} + \frac{|\mathcal P|}{K} \right )= O\left ( \frac{|A|^4}{K^3} + \frac{|A|^2}{K} \right )$$
curves $l \in \mathcal L$ such that $|l \cap \mathcal P| \geq K$. The lemma follows.
\end{proof}

Now, for each $(i,j)$ such that $1 \leq i \leq M$ and $M+1 \leq j \leq 2M$ choose an element $a_{ij} \in A_{\la_i}$ uniformly at random. Then, for any $1\leq i,k \leq M$ and $M+1 \leq j,l \leq 2M$, define $X(i,j,k,l)$ to be the event that
$$\mathcal E (a_{ij},j,a_{kl},l) \geq B,$$
where $B$ is a parameter to be specified later. By Lemma \ref{inc}, the probability that the event $X(i,j,k,l)$ occurs is at most
$$\frac{C}{\tau^2}\left( \frac{|A|^4}{B^3}+\frac{|A|^2}{B} \right ),$$
where $C>0$ is an absolute constant.

Furthermore, note that the event $X(i,j,k,l)$ is independent of the event $X(i',j',k',l')$ unless $(i,j)=(i',j')$ or $(k,l)=(k',l')$. Therefore, the event $X(i,j,k,l)$ is independent of all but at most of $2M^2$ of the other events $X(i',j',k',l')$. With this information, we can apply Lemma \ref{thm:lovazs} with
$$n=M^4-M^2 ,\,\,\,\,\,\,\,\,\, d=2M^2,\,\,\,\,\,\,\,\,\,p=\frac{C}{\tau^2}\left( \frac{|A|^4}{B^3}+\frac{|A|^2}{B} \right ).$$
It follows that there is a positive probability that none of the the events $X(i,j,k,l)$ occur, provided that
\begin{equation}
\frac{eC}{\tau^2}\left( \frac{|A|^4}{B^3}+\frac{|A|^2}{B} \right )(2M^2+1) \leq 1.
\label{probcond}
\end{equation}
The validity of \eqref{probcond} is dependent on our subsequent choice of the value of $B$. For now we proceed under the assumption that this condition is satisfied.

Let
$$Q= \bigcup_{1 \leq i \leq M, M+1 \leq j \leq 2M} \{(a_{ij},\la_i a_{ij})+(\alpha_j a, \la_j \alpha_j a) :a \in A \}.$$
Crucially,
\begin{equation}
r(Q) \geq M^2|A| - \sum_{1\leq i,k \leq M, M+1 \leq j,l \leq 2M : \{i,j\} \neq \{k,l\}} \mathcal E (a_{ij},j,a_{kl},k).
\label{incex}
\end{equation}
In \eqref{incex}, the first term is obtained by counting the $|A|$ slopes in $Q$ coming from all pairs of lines in $U \times T$. The second error term covers the overcounting of slopes that are counted more than once in the first term. 

Since $\mathcal E (a_{ij},j,a_{kl},k) \leq B$ for all quadruples $(i,j,k,l)$ satisfying the aforementioned conditions, it follows that
\begin{equation}
r(Q) \geq M^2|A| - M^4 B.
\label{incex2}
\end{equation}
Choosing $B=\frac{|A|}{2M^2}$, it follows that
\begin{equation}
r(Q) \geq \frac{M^2|A|}{2}.
\label{incex2}
\end{equation}
This choice of $B$ is valid as long as
\begin{equation}
\frac{eC}{\tau^2}( 8M^6|A|+2M^2|A| )(2M^2+1) \leq 1.
\label{probcond2}
\end{equation}
This will certainly hold if
$$\frac{30eC}{\tau^2}M^8|A| \leq 1$$
and so we choose
$$M= \left \lfloor \left ( \frac{\tau^2}{30eC|A|}\right )^{1/8} \right \rfloor.$$
In particular, by \eqref{taubound} we have $M \geq 2$ and so
\begin{equation}
M \gg \frac{\tau^{1/4}}{|A|^{1/8}}.
\label{Mbound}
\end{equation}
It is also true that $M \leq \frac{|S_{\tau}|}{2}$. This is true for all sufficiently large $A$ since
$$|S_{\tau}| \geq \frac{c|A|}{\log|A|} \geq |A|^{1/8} \geq 2M.$$
Therefore
\begin{equation}
\left \lfloor \frac{|S_{\tau}|}{2M} \right \rfloor \gg \frac {|S_{\tau}|}{M}.
\label{intpart}
\end{equation}

Next, note that $r(Q)$ is a subset of the interval $(\la_1,\la_{2M})$. We can repeat this argument for the next cluster to find at least $M^2|A|/2$ elements of $r((AA+ A)\times (AA+A))$ in the interval $(\la_{2M+1},\la_{4M})$ and then so on for each of the $\left \lfloor \frac{|S_{\tau}|}{2M} \right \rfloor$ clusters of size $2M$. It then follows from \eqref{intpart} and \eqref{Mbound} that
\begin{align*}
 \left | \frac{AA+A}{AA+A} \right |&=|r((AA+ A)\times (AA+A))| 
\\&\geq \sum_{j=1}^{\left \lfloor \frac{|S_{\tau}|}{2M} \right \rfloor} \frac{M^2|A|}{2}
\\& \gg |S_{\tau}|M|A|
\\&\gg (|S_{\tau}|\tau)^{1/4}|A|^{7/8}|S_{\tau}|^{3/4}.
\end{align*}
Applying \eqref{Sbound} and \eqref{Sbound2}, we conclude that
$$ \left | \frac{AA+A}{AA+A} \right | \gg \frac{|A|^{2+\frac{1}{8}}}{\log |A|}$$
as required.

\section*{Acknowledgement}

The research of Antal Balog was supported by the Hungarian National Science Foundation Grants NK104183 and
K109789. Oliver Roche-Newton was supported by the Austrian Science Fund FWF Project F5511-N26,
which is part of the Special Research Program "Quasi-Monte Carlo Methods: Theory and Applications".


\end{document}